\theoremstyle{plain}
\newtheorem{theorem}{Theorem}[section]
\newtheorem{definition}[theorem]{Definition}
\newtheorem{lemma}[theorem]{Lemma}
\newtheorem{prop}[theorem]{Proposition}
\newtheorem{cor}[theorem]{Corollary}
\newtheorem{rem}[theorem]{Remark}
\newtheorem{ex}[theorem]{Example}
\newcommand{\ad}{{\operatorname{ad}}}
\newcommand{\mm}{{\operatorname{M}}}
\newcommand{\la}{\langle}
\newcommand{\ra}{\rangle}
\newcommand{\ip}{\la \cdot, \cdot \ra}
\newcommand{\tr}{\operatorname{tr}}
\newcommand{\pg}{\mathfrak{p}}
\newcommand{\kg}{\mathfrak{k}}
\newcommand{\g}{\mathfrak{g}}
\newcommand\C{{\mathbb C}}
\newcommand\R{{\mathbb R}}
\newcommand{\N}{\nabla}
\title[HCF on  Lie Groups and static metrics]{Hermitian Curvature flow on unimodular Lie groups and static invariant metrics}
\subjclass[2010]{Primary 53C15 ; Secondary 53B15, 53C30, 53C44}
\thanks{This work was supported by G.N.S.A.G.A. of I.N.d.A.M.}
\author{Ramiro A.~Lafuente}
\address{School of Mathematics and Physics, The University of Queensland, St Lucia QLD 4072, Australia}
\email{r.lafuente@uq.edu.au}
\author{Mattia Pujia}
\address{Dipartimento di Matematica G. Peano, Universit\`a di Torino, Via Carlo Alberto 10, 10123 Torino, Italy}
\email{mattia.pujia@unito.it}
\author{Luigi Vezzoni}
\address{Dipartimento di Matematica G. Peano, Universit\`a di Torino, Via Carlo Alberto 10, 10123 Torino, Italy}
\email{luigi.vezzoni@unito.it}
\date{\today}
\begin{document}

\begin{abstract}
We investigate the Hermitian curvature flow (HCF) of left-invariant metrics on complex unimodular Lie groups. We show that in this setting the flow is governed by the Ricci-flow type equation $\partial_tg_{t}=-{\rm Ric}^{1,1}
(g_t)$. The solution $g_t$ always exists for all positive times, and $(1 + t)^{-1}g_t$ converges as $t\to \infty$ in Cheeger-Gromov sense to a non-flat left-invariant soliton $(\bar G, \bar g)$. Moreover, up to homotheties on each of these groups there exists at most one left-invariant soliton solution, which is a static Hermitian metric if and only if the group is semisimple. In particular, compact quotients of  complex semisimple Lie groups yield examples of compact non-K\"ahler manifolds with static Hermitian metrics.  We also investigate the existence of static metrics on nilpotent Lie groups and we generalize a result in \cite{EFV} for the pluriclosed flow. In the last part of the paper we study the HCF on Lie groups with abelian complex structures. 
\end{abstract}

\maketitle

\section{Introduction} 
{\em Hermitian curvature flow} (or shortly HCF) is a natural parabolic flow of Hermitian metrics introduced in \cite{ST2} by Streets and Tian. It evolves an initial Hermitian metric in the direction of a Ricci-type tensor defined as a trace of the Chern curvature modified with some first order torsion terms. In the compact case, HCF is a gradient flow and has nice analytic properties, such as short time existence and stability near K\"ahler-Einstein metrics with non-positive Ricci curvature \cite{ST2}. For K\"ahler initial metrics, the flow reduces to the K\"ahler-Ricci flow.  
 
\medskip  
In order to describe the evolution equation more precisely, let $(M,g)$ be a Hermitian manifold of complex dimension $n$ with Chern connection $\N$ and let $\Omega$ be the curvature tensor of $\N$. Denote by $S$ the $(1,1)$-symmetric tensor given by
\[
	S_{i\bar j}=g^{k\bar l}\Omega_{k\bar li\bar j}\,. 
\]
Let $T$ be the torsion of $\nabla$, $T_{ij}^k$ its components and $Q=Q_{i\bar j}$ be  the $(1,1)$-tensor defined by
\[
	Q= \frac12Q^1 - \frac14Q^2- \frac12Q^3 + Q^4\,,
\] 
where the $Q^i$ are quadratic expressions on the torsion given by
\begin{align*}
&Q^1_{i\bar j}=g^{k\bar l}g^{m\bar n}T_{ik\bar n}T_{\bar j\bar l m}\,,\qquad  
Q^2_{i\bar j}=g^{\bar l k}g^{\bar n m}T_{\bar l\bar n i}T_{km \bar j}\,,\\
&Q^3_{i\bar j}=g^{\bar l k}g^{\bar n m}T_{ik\bar l}T_{\bar j\bar n m}\,,\qquad  
Q^4_{i\bar j}=\frac12g^{\bar l k}g^{\bar n m}(T_{mk\bar l}T_{\bar n\bar ji} +T_{\bar n\bar lk}T_{mi\bar j})\,,
\end{align*}
and  $T_{ij\bar k}:=g_{a \bar k }T_{ij}^a$. HCF is then defined as 
\begin{equation}\label{HCF}
\partial_tg_t=-K(g_t)\,,\qquad g_{|t=0}=g_0,
\end{equation}
where $g_0$ is a fixed initial Hermitian metric on $M$ and 
\begin{equation*}\label{eqn_K}
K(g):=S(g)-Q(g)\,. 
\end{equation*}


\medskip 
In the present paper we study the  behavior  and the existence of self-similar solutions to the HCF on Lie groups, with a special emphasis on complex unimodular Lie groups. Since uniqueness for solutions of \eqref{HCF} is not known in general beyond the compact case, we now specify what we mean by this. Let $(G,g_0)$ be a Lie group endowed with a left-invariant Hermitian metric. The tensor $K(g_0)$ is again left invariant, thus one  may  study the corresponding ODE on the Lie algebra $\mathfrak{g} = {\rm Lie}(G)$, for which short-time existence and uniqueness are of course guaranteed. This defines a solution $g_t$ to \eqref{HCF} consisting of left-invariant metrics, and throughout this article we will refer to it as `the' solution to the HCF.

In our first main result we completely describe the long-time behavior of the flow for left-invariant initial metrics on complex unimodular Lie groups.

\begin{theorem}\label{main_cxHCF}
For a complex unimodular Lie group $G$, the maximal solution $g_t$ to the HCF \eqref{HCF} starting at a left-invariant  Hermitian metric satisfies 
\[
	\tfrac{d}{dt}\,g_t=-{\rm Ric}^{1,1}(g_t)\,,
\]
where ${\rm Ric}(g_t)$ denotes the Levi-Civita Ricci tensor. The family of left-invariant Hermitian metrics $g_t$ is defined for all $t\in  (-\epsilon,\infty)$ for some $\epsilon>0$, and $(1 + t)^{-1}g_t$ converges as $t\to \infty$  to a non-flat left-invariant HCF soliton $(\bar G, \bar g)$, in the Cheeger-Gromov topology.  
\end{theorem}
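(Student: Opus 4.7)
The plan is to prove the theorem in three steps. First I would establish the pointwise identity $K(g) = \mathrm{Ric}^{1,1}(g)$ for every left-invariant Hermitian metric on a unimodular Lie group. The Koszul formula expresses $\nabla^{LC}$, and hence $\mathrm{Ric}^{1,1}$, as a polynomial in the structure constants of $\mathfrak{g}$ and the inner product. Similarly the Chern connection is algebraically determined, making all four tensors $Q^i$ and $S$ explicit polynomials in the brackets and the complex structure. In the resulting expression for $K - \mathrm{Ric}^{1,1}$, I expect every surviving term to be a multiple of some trace $\mathrm{tr}\,\mathrm{ad}_X$ (equivalently, a contraction with the Lee $1$-form $\theta(\cdot) = \mathrm{tr}\,\mathrm{ad}_\cdot$ in our setting); unimodularity forces these to vanish. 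The main technical challenge is the bookkeeping required to isolate precisely these trace terms from the specific combination $\tfrac12 Q^1 - \tfrac14 Q^2 - \tfrac12 Q^3 + Q^4$.

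Next, once the flow has been reduced to $\partial_t g_t = -\mathrm{Ric}^{1,1}(g_t)$, I would apply Lauret's bracket-flow formalism: fix a Hermitian vector space $(V,\langle\cdot,\cdot\rangle,J)$ and represent $g_t$ by a family $\mu_t$ of $J$-compatible Lie brackets on $V$ evolving by
\[
  \dot{\mu}_t \;=\; -\pi\!\bigl(\mathrm{Ric}^{1,1}_{\mu_t}\bigr)\,\mu_t,
\]
where $\pi$ is the usual $\mathfrak{gl}(V)$-action on $\Lambda^2V^*\otimes V$. This is a polynomial ODE, so it suffices to bound $|\mu_t|$ on finite intervals. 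For unimodular $\mathfrak{g}$ the scalar curvature is non-positive (Milnor), which yields a monotonicity-type estimate for $|\mu_t|^2$ that rules out finite-time blow-up; combined with the backward short-time solvability of an ODE, this gives the existence interval $(-\epsilon,\infty)$.

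Finally, for the asymptotic behavior I would rescale: setting $\bar\mu_\tau := (1+t)^{1/2}\mu_t$ and $\tau = \log(1+t)$, a direct calculation shows that $\bar\mu_\tau$ satisfies
\[
  \tfrac{d}{d\tau}\bar\mu_\tau \;=\; \tfrac{1}{2}\bar\mu_\tau - \pi\!\bigl(\mathrm{Ric}^{1,1}_{\bar\mu_\tau}\bigr)\,\bar\mu_\tau,
\]
whose rest points correspond exactly to HCF soliton brackets. The bound from the previous step makes this trajectory relatively compact in bracket space, so accumulation points exist; a monotone Lyapunov functional (a suitably normalized scalar curvature is a natural candidate) forces every accumulation point to be a soliton bracket $\bar\mu_\infty$. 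Lauret's correspondence then upgrades bracket convergence to Cheeger-Gromov convergence of $(G,(1+t)^{-1}g_t)$ to the limit Lie group $(\bar G,\bar g)$. Non-flatness follows because a flat left-invariant metric on a unimodular group forces the group to be abelian (again by Milnor), but a lower estimate of the form $|\mu_t|^2 \gtrsim (1+t)^{-1}$ rules out $\bar\mu_\infty = 0$.

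The hardest part is the first step: revealing that the specific algebraic combination defining $K$ is exactly what is needed to match $\mathrm{Ric}^{1,1}$ modulo traces of $\mathrm{ad}$ is the delicate computation underlying the whole strategy. Once that identity is in place, the long-time existence and the convergence of the normalized flow follow from now-standard techniques in Lauret's homogeneous geometric flow framework.
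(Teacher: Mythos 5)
Your overall architecture matches the paper's: reduce HCF to $\partial_t g_t=-{\rm Ric}^{1,1}(g_t)$ by a structure-constant computation plus unimodularity, pass to Lauret's bracket flow, control $\Vert\mu_t\Vert$, and extract a soliton limit from a normalized flow. Your Step 1 is essentially Lemma \ref{M11} together with Proposition \ref{prop_unimod}; just note that the identity requires $G$ to be a \emph{complex} Lie group (bi-invariant $J$), not merely unimodular with a left-invariant complex structure: one uses that $S=0$, that the Killing form contributes only to ${\rm Ric}^{2,0+0,2}$, and that the sole surviving discrepancy $K-{\rm Ric}^{1,1}=\tfrac12 Q^3$ is a product of traces of $\ad$, killed by unimodularity.

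The gap is in Steps 2--3. The entire quantitative content of the paper's argument rests on the fact that $\mm$ is (up to normalization) a moment map for the ${\rm Gl}(\g)$-action on $\Lambda^2\g^*\otimes\g$, i.e.\ $\la \mm_\mu,E\ra=\tfrac14\la\pi(E)\mu,\mu\ra$ for all $E$ (Proposition \ref{prop_mm}), and your proposal never identifies this. Taking $E=\mm_\mu$ gives $\tfrac{d}{dt}\Vert\mu_t\Vert^2=-2\la\pi(\mm_{\mu_t})\mu_t,\mu_t\ra=-8\Vert\mm_{\mu_t}\Vert^2\le 0$, which is what rules out finite-time blow-up. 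Your substitute --- ``for unimodular $\g$ the scalar curvature is non-positive (Milnor)'' --- is false as a general statement (${\rm SU}(2)$ is unimodular and its bi-invariant metric has positive scalar curvature); what is true in the present setting is $r=\tr_g\mm(g)=-\tfrac14\Vert\mu\Vert^2$, but this is itself a consequence of the complex-unimodular identities, and in any case knowing the sign of $\tr\mm_\mu$ does not give the sign of $\la\pi(\mm_\mu)\mu,\mu\ra$ without the moment-map identity. Similarly, your Lyapunov candidate ``suitably normalized scalar curvature'' is constant ($\equiv-\tfrac14$) along the norm-normalized bracket flow, so it detects nothing; the functional that works is $F(\nu)=\Vert\mm_\nu\Vert^2/\Vert\nu\Vert^4$, of which the normalized flow is the negative gradient flow, and obtaining convergence of the whole trajectory (rather than mere subconvergence of accumulation points) requires {\L}ojasiewicz's theorem for real-analytic gradient flows. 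Finally, the two-sided bound $\Vert\mu_t\Vert\sim t^{-1/2}$ that you need for precompactness of your rescaled trajectory, and the non-flatness of the limit, are also obtained in the paper from these moment-map estimates (via \cite{ALpluri}), not from a Milnor-type flatness argument.
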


By convergence in the Cheeger-Gromov topology we mean that for any increasing sequence of times there is a subsequence $(t_k)_{k\in \mathbb{N}}$ for which the corresponding Hermitian manifolds converge in the following sense: there exist biholomorphisms $\varphi_k : \Omega_k \subset \bar G \to \varphi_k(\Omega_k) \subset G $ taking the identity of $\bar G$ to the identity of $G$, such that the open sets $\Omega_k$ exhaust $\bar G$, and in addition $\varphi_k^* g_{t_k} \to \bar g$ as $k\to\infty$, in $C^\infty$ topology uniformly over compact subsets.

An {\em HCF soliton} is a Hermitian metric $g$ such that 
\begin{equation}\label{eqn_soliton}
	K(g)=\lambda g+\mathcal{L}_X g,
\end{equation}
for $\lambda\in \R$ and some complete holomorphic vector field $X$. Here $\mathcal L$ denotes the Lie derivative. The relevance of \eqref{eqn_soliton} is given by the fact that the corresponding HCF solution $g_t$ starting at $g$ is given by $g_t = c(t) \, \varphi_t^* g$, for some one-parameter family of biholomorphisms $\varphi_t : G \to G$ and some scalings $c(t) > 0$. We call a left-invariant HCF soliton \emph{algebraic} if $\varphi_t$ is a Lie group automorphism of $G$ for each $t$.

Particular examples of solitons are given by the so called  {\em static metrics}, that is, those satisfying the Einstein-type equation 
\begin{equation*}\label{eqn_static}
K=\lambda g, \qquad \lambda \in \mathbb{R}.
\end{equation*}
In the compact case, static metrics are critical points of the functional 
\[
	\mathbb F(g)= \int_M k\,dV_g
\]
acting on the space of Hermitian metrics, where $k=g^{\bar ji}K_{i\bar j}$. 

%

\medskip 

Our second main result is about the existence and uniqueness of left-invariant HCF solitons on complex unimodular Lie groups. It gives in particular a complete classification of left-invariant static metrics on such groups.

\begin{theorem}\label{main_cxsoliton}
A complex unimodular Lie group $G$ has at most one {\rm HCF} algebraic soliton up to homotheties. Moreover, $G$ has a static left-invariant metric if and only if it is semisimple, and in this case the `canonical metrics' (in the sense of Definition \ref{def_canonical}) induced by the Killing form of $\g$ are static with $\lambda < 0$.
\end{theorem}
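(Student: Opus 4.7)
The guiding identity is $K(g)={\rm Ric}^{1,1}(g)$ from Theorem \ref{main_cxHCF}, valid for every left-invariant Hermitian metric $g$ on a complex unimodular Lie group. Under this reduction, the static equation becomes ${\rm Ric}^{1,1}(g)=\lambda g$, while the algebraic soliton equation at the Lie algebra level reads
\[
{\rm Ric}^{1,1}_g \;=\; \lambda\,I + \tfrac{1}{2}(D+D^t),
\]
for some $J$-commuting derivation $D\in\operatorname{Der}(\g)$ with $g$-transpose $D^t$. The proof then proceeds in three steps: uniqueness of algebraic solitons, existence of a static metric in the semisimple case (with sign $\lambda<0$), and absence of static metrics otherwise.

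\emph{Uniqueness.} I would follow Lauret's varying-bracket (moment-map) framework, adapted to the Hermitian setting. Fix a Hermitian vector space $(V,\la\cdot,\cdot\ra,J)$, and encode left-invariant Hermitian metrics on $G$ as $U(V,J)$-orbits of Lie brackets $\mu\in\Lambda^2V^*\otimes V$ satisfying the Jacobi identity, unimodularity, and integrability of $J$. A direct calculation shows that algebraic solitons are precisely the critical points of $\mu\mapsto\|\mu\|^2$ along the orbit $\operatorname{GL}(V,J)\cdot\mu$, equivalently minimum vectors in the sense of real geometric invariant theory. By the real Kempf-Ness theorem such minima are unique up to the action of the maximal compact subgroup $U(V,J)$, which translates to uniqueness of algebraic solitons on $G$ up to isometry and homothety.

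\emph{Existence in the semisimple case.} I would construct the static metric directly from the Killing form. Choose a compact real form $\mathfrak u\subset\g$ with Cartan involution $\tau$, and set $g(X,Y):=-\operatorname{Re} B(X,\tau Y)$, where $B$ is the complex Killing form of $\g$; this is a positive-definite left-invariant Hermitian metric. Using the bracket relations $[\mathfrak u,\mathfrak u]\subset\mathfrak u$, $[\mathfrak u,J\mathfrak u]\subset J\mathfrak u$, $[J\mathfrak u,J\mathfrak u]\subset\mathfrak u$, together with the $\ad$-invariance of $B|_{\mathfrak u}$, a direct computation of the Chern connection and its torsion yields $K(g)=\lambda g$ with $\lambda<0$; the sign is also apparent from the trace identity $2n\lambda=\operatorname{scal}(g)$, the right-hand side being strictly negative for this non-bi-invariant metric on a complex semisimple group.

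\emph{Non-existence in the non-semisimple case.} This is the main obstacle. I would combine uniqueness with Theorem \ref{main_cxHCF}: a static $g$ yields the HCF solution $g_t=(1-\lambda t)g$, and long-time existence forces $\lambda\le 0$; either way $g$ is an algebraic soliton with trivial derivation part, so by uniqueness it coincides, up to scaling and isometry, with the soliton $\bar g$ produced in Theorem \ref{main_cxHCF}. It remains to show that on any non-semisimple complex unimodular $\g$ the soliton $\bar g$ carries a non-trivial derivation: writing $\g=\mathfrak r\rtimes\mathfrak s$ with radical $\mathfrak r\neq 0$, I would evaluate $\tr({\rm Ric}^{1,1}_g\circ D_0)$ for the canonical grading derivation $D_0$ of the nilradical and, via a Milnor-type identity analogous to Lauret's for Ricci solitons, express it as a strictly positive sum of squared bracket norms. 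Comparing this with the identity $\tr({\rm Ric}^{1,1}_g\circ D_0)=\lambda\,\tr(D_0)$ forced by $D=0$ then produces the desired contradiction. The technical heart of the argument lies in this trace identity and its positivity, where the Hermitian structure and the unimodularity hypothesis conspire to rule out the non-semisimple case.
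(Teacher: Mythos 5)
Your first two steps track the paper's argument quite closely. The paper reduces everything to the tensor $\mm$ via $K={\rm Ric}^{1,1}=\mm$ (Corollary \ref{M-flow}), proves uniqueness by identifying algebraic solitons with critical points of $F(\mu)=\Vert\mm_\mu\Vert^2/\Vert\mu\Vert^4$ and invoking the uniqueness of such critical points within a ${\rm Gl}(\g)$-orbit up to ${\rm O}(\g)$ (\cite[Cor.~9.4]{realGIT}), and verifies $\mm(g)=-\tfrac14 g$ for the canonical metric by a direct computation with \eqref{Mmu} using that $\ad_X$ is skew-symmetric on $\kg$ and symmetric on $\pg$. One correction to your uniqueness step: algebraic solitons are \emph{not} the critical points of $\mu\mapsto\Vert\mu\Vert^2$ along the orbit --- those are the zeroes of the moment map, i.e.\ $\mm_\mu=0$, which would force $\lambda\,{\rm Id}+D=0$. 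They are critical points of the \emph{norm of the moment map}, and the relevant uniqueness statement is the Ness-type theorem for such critical points, not Kempf--Ness for minimal vectors. The conclusion you want is still true, but the mechanism you cite is the wrong one.

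The genuine gap is in your third step. The paper rules out static metrics on non-semisimple groups in two lines: if $\mm(g)=\lambda g$ with $G$ non-abelian then ${\rm tr}_g\,\mm(g)=-\tfrac14\Vert\mu\Vert^2<0$ forces $\lambda<0$, hence $\mm(g)<0$; but Dotti's lemma (Lemma \ref{lem_dotti}) gives ${\rm tr}_g\,\mm(g)|_{\mathfrak{i}\times\mathfrak{i}}\geq 0$ on any abelian ideal $\mathfrak{i}$, and a non-semisimple algebra always has a nonzero one (the last nonzero term of the derived series of the radical). Your proposed route --- a grading derivation $D_0$ of the nilradical together with a positive ``Milnor-type'' identity for $\tr(\mm_g D_0)$ --- is left entirely unproved and faces real obstructions: the nilradical may be characteristically nilpotent, so no grading derivation exists; even when one exists on $\mathfrak{n}$ it need not extend to a derivation of $\g$; and, decisively, for any genuine derivation $D_0$ the moment-map identity \eqref{eqn_mmgpi} gives $\tr(\mm_g D_0)=\tfrac14\langle\pi(D_0)\mu,\mu\rangle=0$, so the strictly positive quantity you hope to produce cannot come from a derivation. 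As written, the non-semisimple case is not closed; the abelian-ideal positivity estimate is the missing ingredient.
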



In particular, this result yields the existence of many compact non-K\"ahler manifolds admitting static metrics.
Indeed, every semisimple Lie group admits a co-compact lattice and  compact quotients of complex Lie groups by lattices can be K\"ahler only if they are tori. 

A main ingredient in the proof of our main results is the fact that $K(g)$ can be viewed as a moment map for the action of the real reductive Lie group ${\rm Gl}(\g)$ on the vector space of Lie brackets $\Lambda^2 \g^* \otimes \g$ in the sense of real geometric invariant theory (real `GIT' for short), see Lemma \ref{M11} and Section \ref{sec_mm}. Recall that real GIT, initiated by Richardson and Slodowy in \cite{RS90} and further developed by \cite{standard,HS07,HSS08,EJ} among others, has been fruitfully applied in recent years to the study of problems involving geometric flows with symmetryies, see e.g.~\cite{lauret3,BL18,ALpluri}. We adopt in this article the setting and notation used in \cite{realGIT}, and refer the interested reader to that article and the references therein for further details.

The above is in turn based on the formula $K(g)={\rm Ric}^{1,1}(g)$,
 which holds for any left-invariant Hermitian metric $g$ on a complex unimodular Lie group $G$, and which we prove in Proposition \ref{prop_unimod}. 
 We also have that $K(g)={\rm Ric}(g)$ if and only if the Killing form of the Lie algebra $\g$ of $G$ vanishes. In particular, for a complex nilpotent Lie group with Hermitian left-invariant metric, the Ricci flow and the HCF coincide. It is interesting to note that, in this case, our results agree with those obtained by Lauret for the Ricci flow \cite{lauret3}.

Let us also mention that other flows in the HCF family have been studied quite extensively in the literature. By HCF family we mean the following: since short-time existence of HCF \eqref{HCF} in the compact case follows from the ellipticity of the operator 
\[
g \mapsto -S(g)
\] 
acting on the space of Hermitian metrics compatible with a fixed complex structure, it is clear that by changing the lower order torsion term $Q(g)$ in \eqref{HCF} one also gets a well-defined parabolic evolution equation.  
In this direction, in \cite{ST} Streets and Tian introduced the {\em pluriclosed flow} (shortly PCF), a modification of the HCF which preserves the {\em pluriclosed condition} $\partial \bar \partial \omega=0$, defined by   
\begin{equation}\label{PCF}
\partial_tg_t=-S(g_t)+Q^1(g_t)\,,\quad g_{|t=0}=g_0.
\end{equation} 
It evolves an initial pluriclosed form $\omega_0$ via $\partial_t\omega_t=-(\rho^B)^{1,1}$, where $\rho^B$ is the Ricci-form of the Bismut connection. Some  regularity results involving the PCF are known \cite{S2, ST, ST4}, the PCF preserves the  generalized K\"ahler condition, and is a powerful tool in generalized geometry \cite{apostolov, S, S2, ST3}. In the case of homogeneous Hermitian structures, the PCF on Lie groups  was initially studied in \cite{EFV2}, where it is proved that the flow on $2$-step nilpotent SKT Lie groups  has always a long-time solution (see also \cite{pujia}). The proof of the result was obtained by adapting the bracket flow trick introduced by Lauret in \cite{lauret3} to study the Ricci flow on Lie groups. In \cite{boling}, Boling studied locally homogeneous solutions to the PCF on compact complex surfaces, and in \cite{lauret} Lauret gives a regularity result for a class of flows on Lie groups which includes PCF. Also, an analogous result to Theorem \ref{main_cxHCF} for the PCF on $2$-step nilmanifolds was recently obtained in \cite{ALpluri}.

Recently, Ustinovskiy introduced in  \cite{yuri} the following modified HCF
\begin{equation}\label{yuriflow}
\partial_tg_t=-S(g_t)-\frac12Q^ 2(g_t)\,,\quad g_{|t=0}=g_0.
\end{equation}
This last equation preserves non-negativity of the holomorphic bisectional curvature of the Chern connection. In \cite{yuri2} Ustinovskiy studied the modified HCF on complex Lie groups and on complex homogeneous spaces and he proved that the flow preserves the assumption on a Hermitian metric on a complex homogenous spaces $G/H$ to be \emph{induced} by an invariant metric on $G$. Hence if $g_0$ is an induced metric on $G/H$ equation \eqref{yuriflow} is equivalent to an ODE on the Lie algebra of $G$. Note that induced metrics are not in general homogenous, i.e. they need not be invariant under a transitive group action.

Regarding the existence of static metrics for other flows in the HCF family, we recall that bi-invariant metrics on compact semisimple Lie groups compatible with a left-invariant complex structure are pluriclosed and static for the PCF, with $\lambda=0$. On the other hand, it is still an open problem to find an example of a compact, non-K\"ahler manifold admitting a pluriclosed metric which is static for the PCF but has $\lambda\neq 0$.  Such metrics do not exist on compact complex non-K\"ahler surfaces \cite{ST,TJLi}, on non-abelian nilpotent Lie groups, and on some special solvable Lie groups
 \cite{ EFV2,FK1,FK2}. On the other hand, let us mention here that solitons of the the pluriclosed flow on compact complex surfaces were classified in \cite{Ssolitons}. 

%
%

%
%

\medskip

The paper is organized as follows. In Section \ref{firstsection} we give general formulae for the tensors $S$ and $Q^i$ for left-invariant metrics on Lie groups, in terms of the  bracket's components. We also point out how $K$ is related to the Ricci tensor in case the Lie group is complex. In Section \ref{sec_mmm} we prove Theorems \ref{main_cxHCF} and \ref{main_cxsoliton}.  In Section 4 we study explicitly the HCF on $3$-dimensional complex Lie groups. In Section 5 we study the existence  of static metrics on nilpotent Lie groups, while in the last section we take into account the evolution of Hermitian metrics compatible with abelian complex structures.

\bigskip
\noindent {\bf Notation and conventions.} In the indicial expressions, the symbol of sum over repeated indices will be omitted. When talking about a \emph{complex Lie group}, it will always be understood that it is endowed with the complex structure that makes multiplication a holomorphic map (i.e. $J$ is bi-invariant).
 
\bigskip
\noindent {\bf Acknowledgments.} The research of the present paper was originated by some conversations between the third author and Jorge Lauret.  The authors are very grateful to him for many useful insights.  Moreover, the authors would like to thank also Marco Radeschi for a number of useful conversations, and Fabio Podest\`a for his interest on  the paper.

\section{Some general formulae on Lie groups}\label{firstsection}

\subsection{The tensor $K$ on Lie groups}


Let $G$ be a Lie group equipped with a left-invariant complex structure $J$. We denote by $\g$ the Lie algebra of $G$ and by $\mu$ its Lie bracket. In this section we compute the tensor $K$ of a left-invariant Hermitian metric $g$  on $G$ in terms of the components of $\mu$ with respect to a $g$-unitary basis.  

Let $\{Z_1,\dots,Z_n\}$ be a left-invariant $g$-{unitary} frame on $G$, and let $\nabla$ be the \emph{Chern connection}, i.e.~ the unique Hermitian connection ($\nabla J = \nabla g = 0$) for which the $(1,1)$-part of the torsion tensor vanishes. The latter property implies that we can write 
$$
\nabla_{\bar k}Z_l=\nabla_l Z_{\bar k}+\mu(Z_{\bar k},Z_l)\,,
$$
or, in terms of the Christoffel symbols of $\nabla$,
$$
\Gamma_{\bar kl}^r=\mu_{\bar kl}^r\,,\quad \Gamma_{k\bar l}^{\bar r}=\mu_{ k \bar l}^{\bar r}\,.
$$
Using $\nabla J=\nabla g=0$, we have 
$$
g(\nabla_{Z_k}Z_r,Z_{\bar j})=-g(Z_r,\nabla_{Z_k}Z_{\bar j})=-g(Z_r,\mu(Z_k,Z_{\bar j}))=-\mu_{k\bar j}^{\bar r}\,,
$$
i.e.
$$
\Gamma_{kr}^{j}=-\mu_{k\bar j}^{\bar r} \,.
$$
We have 
$$
\Omega_{k\bar li\bar j}=g(\nabla_{k}\nabla_{\bar l}Z_i,Z_{\bar j})-g(\nabla_{\bar l}\nabla_kZ_i,Z_{\bar j})-g(\nabla_{\mu(Z_k,Z_{\bar{l}})}Z_i,Z_{\bar j})\,,
$$
with 
$$
\begin{aligned}
g(\nabla_{k}\nabla_{\bar l}Z_i,Z_{\bar j})=\Gamma_{\bar l i}^r\Gamma_{kr}^{j}=-\mu_{\bar li}^r\mu_{k\bar j}^{\bar r}
\end{aligned}
$$
and
$$
g(\nabla_{\bar l}\nabla_kZ_i,Z_{\bar j})=\Gamma_{ki}^r \Gamma_{{\bar l}r}^j=-\mu_{k\bar r}^{\bar i} \mu_{\bar l r}^j
$$
and 
$$
\begin{aligned}
g(\nabla_{\mu(Z_k,Z_{\bar{l}})}Z_i,Z_{\bar j})=\,\mu_{k\bar l}^r\Gamma_{ri}^j+\mu_{k\bar l}^{\bar r}\Gamma_{\bar ri}^{j}
=-\mu_{k\bar l}^r\mu_{r\bar j}^{\bar i}+\mu_{k\bar l}^{\bar r}\mu_{\bar ri}^{j}\,.
\end{aligned}
$$
Therefore,
$$
\Omega_{k\bar li\bar j}=-\mu_{\bar li}^r\mu_{k\bar j}^{\bar r}+\mu_{k\bar r}^{\bar i} \mu_{\bar l r}^j+\mu_{k\bar l}^r\mu_{r\bar j}^{\bar i}-\mu_{k\bar l}^{\bar r}\mu_{\bar ri}^{j}
$$
and 
$$
S_{i\bar j}=-\mu_{\bar ki}^r\mu_{k\bar j}^{\bar r}+\mu_{k\bar r}^{\bar i} \mu_{\bar k r}^j+\mu_{k\bar k}^r\mu_{r\bar j}^{\bar i}-\mu_{k\bar k}^{\bar r}\mu_{\bar ri}^{j}\,.
$$
In particular the Chern scalar curvature $s=g^{\bar ji}S_{i\bar j}$  takes the following expression in terms of the components of $\mu$
$$
s=\mu_{k\bar k}^r\mu_{r\bar i}^{\bar i}-\mu_{k\bar k}^{\bar r}\mu_{\bar ri}^{i}\,.
$$

Since 
$$
T_{ij}:=\nabla_{i}Z_j-\nabla_jZ_i-\mu(Z_i,Z_j)\,,
$$
we have 
$$
T_{ij}^k:=\Gamma_{ij}^k-\Gamma_{ji}^k-\mu_{ij}^k
$$
and
$$
T_{ij}^k:=-\mu_{i\bar k}^{\bar j}+\mu_{j\bar k}^{\bar i}-\mu_{ij}^k\,,\quad 
T_{ij\bar m}=-\mu_{i\bar m}^{\bar j}+\mu_{j\bar m}^{\bar i}-\mu_{ij}^m\,.
$$
\newline
It follows that
$$
\begin{aligned}
Q^1_{i\bar j}=&\,T_{ik\bar r}T_{\bar j\bar k r}=\left(-\mu_{i\bar r}^{\bar k}+\mu_{k\bar r}^{\bar i}-\mu_{ik}^r\right)
\left(-\mu_{\bar j r}^{k}+\mu_{\bar kr}^{j}-\mu_{\bar j\bar k}^{\bar r}\right)\\
=&\,\mu_{i\bar r}^{\bar k}\mu_{\bar j r}^{k}-\mu_{i\bar r}^{\bar k}\mu_{\bar kr}^{j}+\mu_{i\bar r}^{\bar k}\mu_{\bar j\bar k}^{\bar r}-\mu_{k\bar r}^{\bar i}\mu_{\bar j r}^{k}+\mu_{k\bar r}^{\bar i}\mu_{\bar kr}^{j}-\mu_{k\bar r}^{\bar i}\mu_{\bar j\bar k}^{\bar r}+\mu_{ik}^r\mu_{\bar j r}^{k}-\mu_{ik}^r\mu_{\bar kr}^{j}+ \mu_{ik}^r\mu_{\bar j\bar k}^{\bar r} \,.
\end{aligned}
$$
\newline
In the same way 
$$
\begin{aligned}
Q^2_{i\bar j}=&T_{\bar k\bar r i}T_{kr \bar j}=\left(-\mu_{\bar ki}^{r}+\mu_{\bar ri}^{k}-\mu_{\bar k\bar r}^{\bar i}\right)
\left(-\mu_{k\bar j}^{\bar r}+\mu_{r\bar j}^{\bar k}-\mu_{kr}^j\right)\\
=&\mu_{\bar ki}^{r}\mu_{k\bar j}^{\bar r}-\mu_{\bar ki}^{r}\mu_{r\bar j}^{\bar k}+\mu_{\bar ki}^{r}\mu_{kr}^j-\mu_{\bar ri}^{k}\mu_{k\bar j}^{\bar r}+\mu_{\bar ri}^{k}\mu_{r\bar j}^{\bar k}-\mu_{\bar ri}^{k}\mu_{kr}^j+\mu_{\bar k\bar r}^{\bar i}\mu_{k\bar j}^{\bar r}-\mu_{\bar k\bar r}^{\bar i}\mu_{r\bar j}^{\bar k}+\mu_{\bar k\bar r}^{\bar i}\mu_{kr}^j
\end{aligned}
$$
\newline
and 
$$
\begin{aligned}
Q^3_{i\bar j}&=T_{ik\bar k}T_{\bar j\bar r r}=
\left(-\mu_{i\bar k}^{\bar k}+\mu_{k\bar k}^{\bar i}-\mu_{ik}^k\right)
\left(-\mu_{\bar jr}^r+\mu_{\bar r r}^j-\mu_{\bar j\bar r}^{\bar r}\right)\\
&=\mu_{i\bar k}^{\bar k}\mu_{\bar jr}^r-\mu_{i\bar k}^{\bar k}\mu_{\bar r r}^j+\mu_{i\bar k}^{\bar k}\mu_{\bar j\bar r}^{\bar r}
-\mu_{k\bar k}^{\bar i}\mu_{\bar jr}^r+\mu_{k\bar k}^{\bar i}\mu_{\bar r r}^j-\mu_{k\bar k}^{\bar i}\mu_{\bar j\bar r}^{\bar r}
+\mu_{ik}^k\mu_{\bar jr}^r-\mu_{ik}^k\mu_{\bar r r}^j+\mu_{ik}^k\mu_{\bar j\bar r}^{\bar r}
 \end{aligned}
$$
and  
$$
\begin{aligned}
2Q^4_{i\bar j}=& \mu_{r \bar k}^{\bar k}\mu_{\bar ri}^{j} - \mu_{r \bar k}^{\bar k} \mu_{\bar ji}^{r}+ \mu_{r \bar k}^{\bar k} \mu_{\bar r \bar j}^{\bar i}
-\mu_{k \bar k}^{\bar r}\mu_{\bar ri}^{j} + \mu_{k \bar k}^{\bar r} \mu_{\bar ji}^{r} - \mu_{k \bar k}^{\bar r} \mu_{\bar r \bar j}^{\bar i}
+\mu_{rk}^{k}\mu_{\bar ri}^{j} - \mu_{rk}^{k} \mu_{\bar ji}^{r} + \mu_{rk}^{k}\mu_{\bar r \bar j}^{\bar i} \\
+& \mu_{\bar r k}^{k} \mu_{r \bar j}^{\bar i}- \mu_{\bar r k}^{k} \mu_{i \bar j}^{\bar r}+ \mu_{\bar r k}^{k} \mu_{ri}^{j}
-\mu_{\bar kk}^{r} \mu_{r \bar j}^{\bar i} + \mu_{\bar kk}^{r} \mu_{i \bar j}^{\bar r} - \mu_{\bar kk}^{r} \mu_{ri}^{j}
+\mu_{\bar r \bar k}^{\bar k} \mu_{r \bar j}^{\bar i}- \mu_{\bar r \bar k}^{\bar k} \mu_{i \bar j}^{\bar r}+ \mu_{\bar r \bar k}^{\bar k}  \mu_{ri}^{j} \,.
\end{aligned}
$$

Notice that if $G$ is a complex Lie group, then the mixed brackets $\mu(Z_r,Z_{\bar s})$ vanish and so we have the following relations    
$$
S=0\,,\quad Q_{i\bar j}^1=\mu_{ik}^r\mu_{\bar j\bar k}^{\bar r} \,,\quad 
Q_{i\bar j}^2=\mu_{\bar k\bar r}^{\bar i} \mu_{kr }^j  \,,\quad 
Q^3_{i\bar j}=\mu_{ik}^k\mu_{\bar j\bar r}^{\bar r}\,,\quad 
Q^4_{i\bar j}=\frac{1}{2}\left(\mu_{rk}^k\mu_{\bar r \bar j}^{\bar i}+\mu_{ri}^j\mu_{\bar r\bar k}^{\bar k} \right)\,.
$$
In particular $K$ reduces to 
\begin{equation}\label{CX2}
K_{i\bar j}=-\frac12\mu_{ik}^r\mu_{\bar j\bar k}^{\bar r} +\frac14\mu_{\bar k\bar r}^{\bar i} \mu_{kr }^j+\frac12\mu_{ik}^k\mu_{\bar j\bar r}^{\bar r} - \frac{1}{2}\left(\mu_{rk}^k\mu_{\bar r \bar j}^{\bar i}+\mu_{ri}^j\mu_{\bar r\bar k}^{\bar k} \right).
\end{equation}
%
 
\medskip  
We now use the previous formulas to show how $K$ and the Riemannian Ricci tensor are related on a complex Lie group.  According to  \cite[(7.33)]{Besse}  the Riemannian Ricci tensor of a left-invariant metric $g$ on a Lie group $G$ can be written as
\[
	{\rm Ric}= {\rm M}-\tfrac 12\, {\rm B}-{\rm S}({\rm ad}_{H})\, ,
\]
where for $X,Y$ in $\mathfrak g$
\begin{equation}\label{Mmu}
{\rm M}(X,Y)=-\frac 12  \, g( \mu(X,X_k),\mu(Y,X_k))+\frac 14  \, g( \mu(X_k,X_j),X) g(\mu(X_k,X_j),Y)\,,
\end{equation}
$\{X_r\}$ being an orthonormal basis;  
${\rm B}(X,Y) = {\rm tr} (\ad_X \ad_Y)$ 
is the Killing form of $\g$, $H$ is the {\em mean curvature vector}, uniquely determined by the relation 
$g(H,X)={\rm tr}\,{\rm ad}_X$, for all $X \in \g$, and 
\[
	{\rm S}({\rm ad}_{H})(X,Y)=\frac12\big(g(\mu(H,X),Y)+g(\mu(H,Y),X)\big)\,. 
\]

\begin{lemma}\label{M11}
Let $G$ be a complex Lie group equipped with a left-invariant  Hermitian metric $g$. Then, ${\rm M}$ and $S({\rm ad}_H)$ are of type $(1,1)$, while ${\rm B}$ is of type $(2,0)+(0,2)$. In particular,
\begin{equation}\label{1}
{\rm Ric}^{1,1}={\rm M}-{\rm S}({\rm ad}_{H})\,,\quad {\rm Ric}^{2,0+0,2}=-\frac12 {\rm B}.
\end{equation}
Moreover, 
\begin{equation}\label{Knonuni}
K={\rm Ric}^{1,1}+\frac12 Q^3\,.
\end{equation}
\end{lemma}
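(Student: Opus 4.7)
The proof naturally splits into the three assertions. For the first, the plan is to use the two algebraic facts available on a complex Lie group with compatible Hermitian metric: the bi-invariance identity $\mu(JX,Y) = J\mu(X,Y)$ and the $g$-orthogonality of $J$. For ${\rm M}$, applying $J$-invariance of $g$ to the first summand in \eqref{Mmu} is immediate, and for the second summand one uses $g(C,JX) = -g(JC,X)$ together with $J\mu(X_r,X_s) = \mu(JX_r,X_s)$ and the basis-independence of the contraction $\sum_{r,s}g(\mu(X_r,X_s),A)g(\mu(X_r,X_s),B)$, so that replacing $\{X_r\}$ by the orthonormal basis $\{JX_r\}$ leaves the sum unchanged. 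The argument for ${\rm S}({\rm ad}_H)$ is a direct one-line computation along the same lines. For ${\rm B}$, using $\ad_{JX} = J\ad_X$ together with $[J,\ad_X]=0$, one has $\ad_{JX}\ad_{JY} = \ad_X J^2 \ad_Y = -\ad_X\ad_Y$, so ${\rm B}(JX,JY) = -{\rm B}(X,Y)$. The decomposition \eqref{1} is then immediate from ${\rm Ric} = {\rm M} - \tfrac{1}{2}{\rm B} - {\rm S}({\rm ad}_H)$.

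For the last identity \eqref{Knonuni}, the strategy is to compute ${\rm M}(Z_i,Z_{\bar j})$ and ${\rm S}({\rm ad}_H)(Z_i,Z_{\bar j})$ in the unitary frame and compare with \eqref{CX2}. Using the dual-basis identity $\sum_r X_r \otimes X_r = \sum_a(Z_a\otimes Z_{\bar a} + Z_{\bar a}\otimes Z_a)$ together with the vanishing of the mixed brackets $\mu(Z_i,Z_{\bar j})=0$, the two summands of \eqref{Mmu} reduce to a single nontrivial contribution each, yielding ${\rm M}_{i\bar j} = -\tfrac12 \mu_{ik}^r \mu_{\bar j\bar k}^{\bar r} + \tfrac14 \mu_{\bar k\bar r}^{\bar i}\mu_{kr}^j$. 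Next I would identify the mean curvature vector: extending $g(H,X) = \tr\ad_X$ $\C$-linearly to $X = Z_i$ and observing that $\ad_{Z_i}$ preserves $\g^{1,0}$ and annihilates $\g^{0,1}$, one finds $\tr \ad_{Z_i} = \mu_{ik}^k$ and hence $H = \mu_{\bar a\bar k}^{\bar k}Z_a + \mu_{ak}^k Z_{\bar a}$. Substituting this into the definition of ${\rm S}({\rm ad}_H)$ then gives ${\rm S}({\rm ad}_H)_{i\bar j} = \tfrac12 (\mu_{ri}^j\mu_{\bar r\bar k}^{\bar k} + \mu_{rk}^k\mu_{\bar r\bar j}^{\bar i})$.

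Combining via ${\rm Ric}^{1,1}_{i\bar j} = {\rm M}_{i\bar j} - {\rm S}({\rm ad}_H)_{i\bar j}$ and comparing with \eqref{CX2}, the only discrepancy with $K_{i\bar j}$ is the term $\tfrac12 \mu_{ik}^k\mu_{\bar j\bar r}^{\bar r} = \tfrac12 Q^3_{i\bar j}$, present in $K$ but absent from ${\rm Ric}^{1,1}$, which is precisely the claimed identity. The main potential pitfall is the bookkeeping when translating between the real orthonormal basis $\{X_r\}$ used to define ${\rm M}$ and ${\rm B}$ and the complex unitary frame $\{Z_i\}$ appearing in \eqref{CX2}; this is handled once and for all by the dual-basis identity above, after which the proof reduces to straightforward index matching.
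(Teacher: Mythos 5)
Your proposal is correct and follows essentially the same route as the paper: $J$-invariance arguments for ${\rm M}$, ${\rm S}({\rm ad}_H)$ and ${\rm B}$ (via $\ad_{JX}=J\ad_X$ and the orthonormal basis swap $\{X_r\}\mapsto\{JX_r\}$), then the unitary-frame computation of ${\rm M}_{i\bar j}$ and of $H$ with $H_k=\tr\ad_{Z_{\bar k}}=\mu_{\bar k\bar l}^{\bar l}$, and finally index comparison with \eqref{CX2} isolating the $\tfrac12 Q^3$ term. No gaps.
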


\begin{proof}
Let $\{X_1,\dots,X_{2n}\}$ be a $J$-invariant orthonormal basis of $\g$, where $J$ is the complex structure of $G$. We directly compute 
$$
\begin{aligned}
{\rm M}(JX,JY)=&\,-\frac 12g(\mu(JX,X_k),\mu(JY,X_k))+\frac 14 g(\mu(X_k,X_j),JX) g(\mu(X_k,X_j),JY)\\
=&\,-\frac 12 g(J\mu(X,X_k),J\mu(Y,X_k))+\frac 14 g(\mu(JX_k,X_j),X) g(\mu(JX_k,X_j),Y)\\
=&\,-\frac 12 g(\mu(X,X_k),\mu(Y,X_k))+\frac 14 g(\mu(X_k,X_j),X) g(\mu(X_k,X_j),Y)\\
=&\,{\rm M}(X,Y)\,,
\end{aligned}
$$
for every $X,Y$ in $\g$, which implies that ${\rm M}$ is of type $(1,1)$. Moreover, 
$$
\begin{aligned}
{\rm S}({\rm ad}_{H})(JX,JY)=&\,\frac12\left(g(\mu(H,JX),JY)+g(\mu(H,JY),JX)\right)\\
=&\,\frac12\left(g(J\mu(H,X),JY)+g(J\mu(H,Y),JX)\right)={\rm S}({\rm ad}_{H})(X,Y)\,,
\end{aligned}
$$
and 
$$
{\rm B}(JX,JY)={\rm tr}(\ad_{JX}\ad_{JY})={\rm tr}(J^2\ad_{X}\ad_{Y})=-{\rm B}(X,Y)
$$
which imply \eqref{1}. 

Let  $\{Z_r\}$ be a unitary frame. We have 
$$
{\rm M}(Z_i,Z_{\bar j})=-\frac12 \mu_{ik}^{ r}\mu_{\bar j\bar k}^{\bar r}+\frac14\mu_{\bar k\bar r}^{\bar i} \mu_{kr }^j\\,
$$
and 
$$
\begin{aligned}
{\rm S}({\rm ad}_{H})(Z_i,Z_{\bar j})&=\,\frac12\left(g(\mu(H,Z_i),Z_{\bar j})+g(\mu(H,Z_{\bar j}),Z_{i})\right)\\
&=\, \frac12\left(H_k 	\mu_{ki}^j+H_{\bar k} \mu_{\bar k\bar j}^{\bar i}\right)\,.
\end{aligned}
$$
Since 
$$
H_{k}=g(H,Z_{\bar k})={\rm tr}\,{\rm ad}_{Z_{\bar k}}=\mu_{\bar k\bar l}^{\bar l}
$$
we infer 
$$
{\rm S}({\rm ad}_{H})(Z_i,Z_{\bar j})=\frac12\left(\mu_{\bar k\bar l}^{\bar l} \mu_{ki}^j+\mu_{ kl}^{ l} \mu_{\bar k\bar j}^{\bar i}\right)
$$
and \eqref{CX2} implies the second part of the statement. 
\end{proof}

 Lemma \ref{M11} has the following direct consequence:
\begin{cor}\label{20+02} 
Let $G$ be a complex semisimple Lie group. Then, the Ricci tensor  of a left-invariant Hermitian metric on $G$ is never of type $(1,1)$. In particular $G$ has no  left-invariant Hermitian metrics which are also Einstein. 
\end{cor}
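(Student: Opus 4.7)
The plan is to read off the corollary directly from Lemma \ref{M11}, using only the fact that the Killing form of a complex semisimple Lie algebra is non-degenerate.

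First I would recall from Lemma \ref{M11} that for any left-invariant Hermitian metric on a complex Lie group $G$ one has the decomposition
\[
	{\rm Ric}^{1,1}={\rm M}-{\rm S}({\rm ad}_{H})\,,\qquad {\rm Ric}^{2,0+0,2}=-\tfrac12 {\rm B}\,,
\]
where ${\rm B}$ is the Killing form of $\g$. In particular, ${\rm Ric}$ is of type $(1,1)$ if and only if the $(2,0)+(0,2)$-part of ${\rm B}$ vanishes. Since the computation in the proof of Lemma \ref{M11} already showed that ${\rm B}(JX,JY)=-{\rm B}(X,Y)$ for all $X,Y\in\g$, the Killing form has no $(1,1)$-component at all; hence ${\rm Ric}$ is of type $(1,1)$ if and only if ${\rm B}=0$.

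Now assume $G$ is complex semisimple. Then by Cartan's criterion ${\rm B}$ is non-degenerate on $\g$, and in particular ${\rm B}\not\equiv 0$. Combining this with the previous observation yields that ${\rm Ric}$ cannot be of type $(1,1)$, proving the first assertion.

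For the Einstein statement, suppose by contradiction that a left-invariant Hermitian metric $g$ on $G$ satisfies ${\rm Ric}=\lambda\, g$ for some $\lambda\in\R$. Since $g$ is Hermitian, $g(JX,JY)=g(X,Y)$, so ${\rm Ric}$ would then be of type $(1,1)$, contradicting the first part of the corollary. Hence no such Einstein metric exists. The only potential obstacle is making sure that the identity ${\rm B}(JX,JY)=-{\rm B}(X,Y)$ has indeed been established in the excerpt; this appears explicitly in the proof of Lemma \ref{M11}, so no extra work is required.
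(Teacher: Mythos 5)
Your proof is correct and is exactly the argument the paper intends: the corollary is stated as a direct consequence of Lemma \ref{M11}, whose identity ${\rm Ric}^{2,0+0,2}=-\tfrac12{\rm B}$ together with the non-degeneracy (hence non-vanishing) of the Killing form of a semisimple Lie algebra gives the first claim, and the Einstein statement follows since $\lambda g$ is of type $(1,1)$ for any Hermitian $g$. No gaps; nothing further is needed.
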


Next we focus on {\em unimodular} complex Lie groups. We recall that a Lie group $G$ is unimodular if ${\rm tr}\, {\rm ad}_X=0$ for all $X\in \g$. If $G$ is equipped with a left-invariant Hermitian structure, the unimodular condition reads in terms of a left-invariant unitary frame as 
$$
\mu_{ir}^{ r}+\mu_{i\bar r}^{\bar r}=0\,,\quad i=1,\dots,n\,.   
$$ 
If $G$ is a complex Lie group, the  unimodular condition simply reduces to
\begin{equation}\label{unimodular}
\mu_{ir}^{ r}=0\,,\quad i=1,\dots,n\,.
\end{equation}

\begin{prop}\label{prop_unimod}
Let $G$ be a complex Lie group with a left-invariant Hermitian metric $g$ and denote by $r$ the Riemannian scalar curvature of $g$. The following facts are equivalent: 
\begin{enumerate}[label=(\roman*)]
\item[$1.$] $r=k$;

\vspace{0.1cm}
\item[$2.$] $G$ is unimodular;

\vspace{0.1cm}
\item[$3.$] $K={\rm Ric}^{1,1}$.
\end{enumerate}
Moreover, if one of these holds, then $K={\rm Ric}$ if and only if the Killing form of $\g$ vanishes. In particular, if $G$ is nilpotent then $K={\rm Ric}$.
\end{prop}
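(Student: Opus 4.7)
The plan is to leverage two identities from Lemma~\ref{M11}, namely
\[
K={\rm Ric}^{1,1}+\tfrac12 Q^3\qquad\text{and}\qquad{\rm Ric}^{2,0+0,2}=-\tfrac12{\rm B},
\]
together with the explicit formula $Q^3_{i\bar j}=\mu_{ik}^k\mu_{\bar j\bar r}^{\bar r}$ derived earlier in this section for complex Lie groups. With these in hand the proposition reduces to elementary trace manipulations.

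For the equivalence $(2)\Leftrightarrow(3)$, I would observe that on a complex Lie group unimodularity is, by \eqref{unimodular}, precisely the condition $\mu_{ir}^r=0$ for every $i$, which by inspection of $Q^3_{i\bar j}=\mu_{ik}^k\mu_{\bar j\bar r}^{\bar r}$ is the same as $Q^3\equiv 0$ (one direction is immediate, the other follows from the fact that, in a $g$-unitary frame, $Q^3_{i\bar i}=|\mu_{ik}^k|^2$). The identity $K={\rm Ric}^{1,1}+\tfrac12 Q^3$ then gives $(2)\Leftrightarrow(3)$ at once.

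For $(1)\Leftrightarrow(2)$ I would take the $g$-trace of the same identity. Since $g$ is of type $(1,1)$, the $g$-trace annihilates the $(2,0)+(0,2)$-part of the Ricci tensor, so the $g$-trace of ${\rm Ric}^{1,1}$ computes $r$ up to the customary real-versus-complex trace factor, while a direct computation in a $g$-unitary frame gives
\[
g^{\bar j i}Q^3_{i\bar j}=\sum_i\bigl|\mu_{ik}^k\bigr|^2\ \geq\ 0.
\]
Thus $k-r$ is a positive multiple of this sum of squares of the traces $\mu_{ik}^k=\tr\,\ad_{Z_i}$, and vanishes exactly when $G$ is unimodular, giving $(1)\Leftrightarrow(2)$.

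For the remaining statement, under any of $(1)$--$(3)$ we have $K={\rm Ric}^{1,1}$, so $K={\rm Ric}$ is equivalent to ${\rm Ric}^{2,0+0,2}=0$, and by Lemma~\ref{M11} this in turn is equivalent to ${\rm B}\equiv 0$. A nilpotent Lie algebra is both unimodular and has trivial Killing form, so the last claim follows immediately. The argument is essentially bookkeeping on top of Lemma~\ref{M11}; the only mild subtlety is the conventional factor relating $r$ and $k=g^{\bar j i}K_{i\bar j}$, which is a fixed multiplicative constant and therefore does not affect the equivalence $k=r\Leftrightarrow Q^3\equiv 0$. I do not foresee any serious obstacle.
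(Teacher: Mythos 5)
Your argument is correct and follows essentially the same route as the paper's proof: both rest on the identity $K={\rm Ric}^{1,1}+\tfrac12 Q^3$ from Lemma \ref{M11}, the observation that ${\rm tr}_g Q^3=\sum_i|\mu_{ik}^k|^2\geq 0$ vanishes exactly when $Q^3\equiv 0$, i.e.\ when $G$ is unimodular, and the identification ${\rm Ric}^{2,0+0,2}=-\tfrac12{\rm B}$ for the final claim. The only difference is presentational (you separate the equivalences and flag the trace-normalization convention explicitly), so nothing further is needed.
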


\begin{proof}

We have  
$$
k={\rm tr}_{g}\, K={\rm tr}_{g}\, {\rm Ric}^{1,1}+\frac12 {\rm tr}_g\,Q^{3}=r+\frac12 {\rm tr}_g\,Q^{3}\,.
$$
Since  
$$
 {\rm tr}_g\,Q^{3}=\mu_{ik}^k\mu_{\bar i\bar r}^{\bar r}\,,
$$
we have 
$$
Q^3=0\iff {\rm tr}_g\,Q^{3}=0
$$
and  the equivalences $1\iff 2\iff 3$ follow.  

Finally, it is well-known that a nilpotent Lie algebra is unimodular and has zero Killing form; thus, the last part of the statement follows.
\end{proof}
Note that if $g$ is a left-invariant  Hermitian metric on a complex unimodular Lie group, by \eqref{CX2} and Lemma \ref{M11}, with respect to a unitary frame $K$ is given by 
\begin{equation}\label{CX}
K_{i\bar j}=-\frac12\mu_{ik}^r\mu_{\bar j\bar k}^{\bar r} +\frac14\mu_{\bar k\bar r}^{\bar i} \mu_{kr }^j.
\end{equation}

The  following corollary will have a central in role in the study on the HCF on unimodular complex Lie groups 
\begin{cor}
\label{M-flow}
Let $G$ be a complex unimodular Lie group with a left-invariant Hermitian metric $g_0$. Then, the {\em HCF} starting at $g_0$ can be written simply as 
\[
	\partial_tg_t=-{\rm M} (g_t)\,,\quad g_{|t=0}=g_0\,,
\]
where ${\rm M}$ is the left-invariant tensor defined via \eqref{Mmu}.  
\end{cor}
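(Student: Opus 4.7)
The statement is that for a complex unimodular Lie group $G$ with a left-invariant Hermitian metric, the HCF reduces to $\partial_tg_t=-{\rm M}(g_t)$. My plan is to simply chain together the results already established in this section, since the corollary is essentially a repackaging of Proposition \ref{prop_unimod} together with Lemma \ref{M11} under the unimodularity hypothesis.

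First, by the definition \eqref{HCF} of HCF, the evolution equation is $\partial_t g_t = -K(g_t)$, so it suffices to show $K(g) = {\rm M}(g)$ for every left-invariant Hermitian metric $g$ on $G$. By Proposition \ref{prop_unimod}, the complex unimodular hypothesis gives $K = {\rm Ric}^{1,1}$, and by Lemma \ref{M11} we have the decomposition
\[
{\rm Ric}^{1,1} = {\rm M} - {\rm S}({\rm ad}_H).
\]
So the entire task reduces to verifying that the ${\rm S}({\rm ad}_H)$-term vanishes in the unimodular setting.

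For this I invoke the definition of the mean curvature vector: $H \in \g$ is characterized by $g(H,X) = \tr\,{\rm ad}_X$ for all $X \in \g$. Unimodularity of $G$ means precisely that $\tr\,{\rm ad}_X = 0$ for every $X \in \g$, so $H = 0$, and consequently ${\rm S}({\rm ad}_H) \equiv 0$. Substituting back gives $K = {\rm Ric}^{1,1} = {\rm M}$, proving the corollary.

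There is essentially no obstacle here; the argument is a one-line deduction from the preceding lemma and proposition. The only thing worth being careful about is confirming that the vanishing of $H$ as a left-invariant vector field on $G$ immediately forces the bilinear tensor ${\rm S}({\rm ad}_H)$ to vanish, which is immediate from its definition $\tfrac12\bigl(g(\mu(H,X),Y)+g(\mu(H,Y),X)\bigr)$.
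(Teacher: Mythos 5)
Your argument is correct and is exactly the paper's proof: combine Proposition \ref{prop_unimod} ($K={\rm Ric}^{1,1}$) with Lemma \ref{M11} (${\rm Ric}^{1,1}={\rm M}-{\rm S}({\rm ad}_H)$) and observe that unimodularity forces $H=0$, hence ${\rm S}({\rm ad}_H)=0$. Nothing to add.
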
 

\begin{proof}
This follows immediately from Proposition \ref{prop_unimod} and Lemma \ref{M11}, using the fact that for a unimodular Lie group one has $H = 0$.
\end{proof}



\subsection{The bracket flow approach}\label{sec_bf}
In this section we recall  the bracket flow device introduced by Lauret in \cite{lauret3} and used in \cite{ALpluri,EFV2,lauret,lauret1,lauretval} to study geometric flows of (almost)-Hermitian structures. The trick consists in regarding the flow as an evolution equation for Lie brackets instead of metrics.  The argument can be used to study evolutions of a large class of geometric structures on homogeneous spaces \cite{lauret1}. 

Let $G$ be a Lie group with Lie algebra $\g$. The Lie  bracket $\mu_0$ of $\g$ is an element of the \emph{variety of Lie algebras}
\[
\mathcal C=\left\{\mu\in\Lambda^2\g^*\otimes \g \mbox{ satisfying the Jacobi identity }\right\}\subseteq \Lambda^2\g^*\otimes \g\,. 
\] 
Let  $(g_t)_{t\in I}$, $0\in I\subset \R$, be the unique solution to the following  flow on $G$:
\begin{equation}\label{genflow}
	\partial_tg_t= -P(g_t)\,,\qquad g_{|t=0}=g_0.
\end{equation}
Here, the map $P$ is any `geometric' operator (that is, invariant under diffeomorphisms) from the space of Riemannian metrics on $G$ to the space of symmetric $2$-tensors (for instance, $P(g)=2\, {\rm Ric}(g)$ in the case of the Ricci flow). Diffeomorphism invariance implies that $P(g)$ is left-invariant for any left-invariant $g$. Thus, as explained in the discussion before Theorem \ref{main_cxHCF}, for a given left-invariant initial metric $g_0$ there exists a unique solution $g_t$ to \eqref{genflow} consisting entirely of left-invariant metrics. Evaluating everything at the identity $e\in G$ and using $T_e G \simeq \mathfrak{g}$ one can write $g_t$ in terms of the starting metric $g_0$ as 
\[
	g_t(\cdot,\cdot)=g_0(A_t\cdot,A_t\cdot),
\]
for a smooth curve $(A_t)_{t\in I} \subset {\rm Gl}(\g)$ with $A_0 = {\rm Id}_\g$. Even though the curve $A_t$ is of course not unique, Theorem 1.1 in \cite{lauret} shows that one can choose it so that the corresponding family of brackets
\[
	\mu_t := A_t \cdot \mu_0
\]
satisfies the  {\em bracket flow} equation  
\begin{equation}\label{bracketflow}
	\frac{d}{dt}\mu_t = -  \pi \big(P_{\mu_t} \big) \mu_t, \qquad \mu_{t|=0}=\mu_0\,.
\end{equation}
Here, $(A, \mu) \mapsto A\cdot \mu$ is the `change of basis' linear action of ${\rm Gl}(\g)$ on $\mathcal C \subset \Lambda^2 \g^* \otimes \g$, given by 
\[
	A\cdot \mu(\cdot,\cdot)=A\,\mu(A^{-1}\cdot,A^{-1}\cdot)\,,
\]
and $\pi : {\rm End}(\g) \to {\rm End}( \Lambda^2 \g^* \otimes \g)$ is the corresponding Lie algebra representation
\begin{equation}\label{delta}
 \big(\pi(E)\mu \big) (X,Y) :=  E(\mu(X,Y)) - \mu(E(X),Y) - \mu(X,E (Y))\,, \quad X, Y\in \g, \quad E\in {\rm End}(\g).
\end{equation}
Finally, $P_{\mu_t}\in {\rm End}(\g)$ is related to the value of $P(g_t)$ at $e\in G$ by 
\[
	P_{ \mu_t}=A_tP_{g_t}A_t^{-1}\,,\qquad g_t(P_{g_t}\cdot,\cdot)=P(g_t)(\cdot,\cdot)\,.
\]

The bracket flow is a powerful tool for proving results about the long-time existence and regularity of geometric flows on homogeneous spaces. For instance, in certain cases (such as the Ricci flow and the pluriclosed flow on $2$-step nilpotent Lie groups), one has
\[
	\tfrac{d}{dt}\langle \mu_t,\mu_t\rangle\leq 0, 
\]
which implies long-time existence for the ODE \eqref{bracketflow} and hence also for the original flow.

\section{The $\mm$-flow}\label{sec_mmm}

In view of Corollary \ref{M-flow}, the HCF  starting at a left-invariant metric $g_0$ on a complex unimodular  Lie group $G$ reduces to the ODE 
\begin{equation}\label{m-flow}
\tfrac{d}{dt}g_t=-{\rm M} (g_t)\,,\quad g_{|t=0}=g_0\,,
\end{equation}
which we call the {\em $\mm$-flow}. The same is true for the Ricci flow of left-invariant metrics on a nilpotent Lie group \cite{lauret3}, and so \eqref{m-flow} models both the HCF on complex Lie groups and the Ricci flow on simply-connected nilpotent Lie groups. 

In this section we study  \eqref{m-flow}  in the \lq\lq general\rq\rq\, setting of a Lie group $G$ equipped with a left-invariant initial metric $g_0$. Theorems \ref{main_cxHCF} and \ref{main_cxsoliton} arise from the results of this section.

\subsection{The tensor $\mm$ is a moment map}\label{sec_mm}
After fixing a backgroung metric $g_0$ on $G$, we have an induced scalar product on $\g$, and this in turn gives naturally defined scalar products on any tensor product of $\g$ and its duals. In particular, on ${\rm End}(\g)$ we have the associated scalar product 
\[
	\la  A, B \ra := {\rm tr}\, A B^t,  \qquad A, B \in {\rm End}(\g),
\]
where the transpose is taken with respect to $g_0$. If $\{ e_i\}$ is a $g_0$-orthonormal basis of $\g$ and $\{e^i \}$ its dual, then on $\Lambda^2 \g^* \otimes \g$ one may consider the scalar product, also denoted by $\ip$, making $\{e^i\wedge e^j \otimes e_k \}_{i<j,k}$ orthonormal. It is easy to see that this does not depend on the choice of basis.

Given another left-invariant metric $g(\cdot, \cdot) = g_0(A \cdot, A \cdot)$ on $G$, $A\in {\rm Gl}(\g)$, we follow the approach described in Section \ref{sec_bf} and set $\mu := A \cdot \mu_0$ and 
\[
	\mm_\mu := A \mm_g A^{-1}, \qquad 	g(\mm_g \cdot, \cdot) = \mm(g)(\cdot, \cdot).
\] 
Notice that $\mm_\mu$ is $g_0$-symmetric.

Having introduced the ${\rm Gl}(\g)$-action on $\Lambda^2 \g^* \otimes \g$ and the above notation, we are now in a position to state a remarkable property of the tensor $\mm$, first observed in \cite{lauretJFA} for the case of the corresponding complexified representation of ${\rm Gl}_n(\C)$ and proved in \cite[Prop.3.5]{lauret06} in the real setting. For convenience of the reader we include a short proof in our notation. We refer to \cite{realGIT}  and the references therein for further details on moment maps in this setting.

\begin{prop}\cite{lauret06}\label{prop_mm}
The mapping 
\[
	\Lambda^2 \g^* \otimes \g \setminus \{ 0\} \to {\rm End}(\g), \qquad\mu \mapsto \frac{4}{\Vert \mu \Vert^2} \, \mm_\mu,
\] 
is a moment map for the linear ${\rm Gl}(\g)$-action on $\Lambda^2 \g^* \otimes \g$, in the sense of real geometric invariant theory. That is, 
\begin{equation}\label{eqn_mm}
	\la \mm_\mu, E \ra = \tfrac14 \, \la  \pi(E)\mu, \mu \ra, \qquad \mbox{for all } E\in {\rm End}(\g), \quad \mu \in \Lambda^2 \g^* \otimes \g \setminus \{ 0\}\,,
\end{equation}
where $\pi(E)\mu$ is defined as in \eqref{delta}. 
\end{prop}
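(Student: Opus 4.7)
The plan is to prove \eqref{eqn_mm} by a direct, explicit computation in coordinates. No deep conceptual input is needed; the identity is essentially bookkeeping, the nontrivial part being to track indices carefully and to invoke the antisymmetry of $\mu$ at the right moments, as well as to make sure the numerical factor $\tfrac14$ matches the chosen normalizations of $\ip$ on $\g$, on $\mathrm{End}(\g)$, and on $\Lambda^2 \g^* \otimes \g$.

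To carry it out, I would fix a $g_0$-orthonormal basis $\{e_i\}$ of $\g$ and introduce the shorthand $C_{ij}^k := g_0(\mu(e_i,e_j), e_k)$ for the structure constants of $\mu$ and $E_j^i := g_0(Ee_j, e_i)$ for the matrix entries of $E$. Expanding the right-hand side via \eqref{delta}, the pairing $\la \pi(E)\mu, \mu\ra$ decomposes into three pieces coming respectively from $E\mu(X,Y)$, $\mu(EX,Y)$ and $\mu(X,EY)$:
\[
\la \pi(E)\mu, \mu \ra \;=\; \sum E_l^k\, C_{ij}^l\, C_{ij}^k \;-\; \sum E_i^k\, C_{kj}^l\, C_{ij}^l \;-\; \sum E_j^k\, C_{ik}^l\, C_{ij}^l,
\]
all indices summed. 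Swapping $i \leftrightarrow j$ and using $C_{ji}^k = -C_{ij}^k$ identifies the last two sums with each other, so only two genuinely distinct contractions of $E$ with quadratic expressions in the $C_{ij}^k$'s remain.

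Next I would expand the left-hand side. Since $\mm_\mu$ is $g_0$-symmetric, $\la \mm_\mu, E \ra = \tr(\mm_\mu E^t) = \sum_{i,j} \mm(e_i, e_j)\, E_j^i$, and substituting the definition \eqref{Mmu} in the basis $\{e_i\}$ produces precisely the same two types of index contractions as in the previous step, weighted by $-\tfrac12$ and $+\tfrac14$ respectively. A routine relabeling of dummy indices then matches $4\,\la \mm_\mu, E \ra$ with $\la \pi(E)\mu, \mu \ra$ term by term, yielding \eqref{eqn_mm}. Once the notation is set up, every step is one or two lines of algebra; the hardest point is purely combinatorial, namely to make sure that the overall constant comes out exactly right after all the antisymmetrizations and relabelings.
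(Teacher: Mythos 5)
Your proposal is correct and follows essentially the same route as the paper: fix a $g_0$-orthonormal basis, expand both sides in structure constants, and use the antisymmetry of $\mu$ to merge the two negative contractions coming from $\mu(E\cdot,\cdot)$ and $\mu(\cdot,E\cdot)$ so that the factor $\tfrac14$ comes out (the paper merely specializes $E$ to an elementary matrix by linearity rather than keeping it general). The one step you pass over silently is the identification of the $g_0$-matrix entries of $\mm_\mu := A\,\mm_g A^{-1}$ with the formula \eqref{Mmu} evaluated on the bracket $\mu = A\cdot\mu_0$ and the metric $g_0$; this equivariance under the change of basis $\tilde e_r = A^{-1}e_r$ is exactly what the first half of the paper's computation verifies, and it should be made explicit.
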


\begin{proof}
Let $\{ e_r\}$ be a $g_0$-orthonormal basis for $\g$. Since the statement to prove is linear in $E$, we may assume without loss of generality that the matrix of $E$ with respect to the basis $\{ e_r\}$ has a $1$ in its $i,j$ entry, and $0$'s otherwise. Denote by $\mu_{rs}^k$ the structure coefficients of a bracket $\mu$ with respect to $\{e_r \}$. Then, the left-hand side equals
\begin{align*}
	g_0(\mm_\mu e_j, e_i) = g(A^{-1} A \mm_g A^{-1} e_j, A^{-1} e_i) = g(\mm_g \tilde e_j, \tilde e_i) = \mm(g)(\tilde e_i, \tilde e_j),
\end{align*}
where $\tilde e_r := A^{-1} e_r$ is a $g$-orthonormal basis for $\g$. By \eqref{Mmu}, using $X_r = \tilde e_r$, we have
\begin{align*}
	\mm(g)(\tilde e_i, \tilde e_j) &= -\frac12 g(\mu_0(\tilde e_i, \tilde e_r), \mu_0 (\tilde e_j, \tilde e_r) ) + \frac14 g(\mu_0(\tilde e_r, \tilde e_s), \tilde e_i ) \, g(\mu_0(\tilde e_r, \tilde e_s), \tilde e_j) \\
	&= -\frac12 \, g_0 (\mu(e_i, e_r), \mu(e_j, e_r)) + \frac14 \, g_0 (\mu(e_r, e_s), e_i) \, g_0(\mu(e_r, e_s), e_j) \\
	&=  -\frac12 \, \mu_{ir}^k \mu_{jr}^k  + \frac14 \, \mu_{rs}^i \mu_{rs}^j.
\end{align*}

On the other hand, the right-hand side equals
\begin{align*}
	\frac14 \, g_0\big( (\pi(E)\mu)(e_r,e_s), e_k  \big) \, \, g_0\big( \mu(e_r,e_s), e_k  \big)   =&  \frac14  \, g_0\big( E\mu(e_r,e_s), e_k  \big) \, \, g_0\big( \mu(e_r,e_s), e_k  \big)   \\
	& - \frac14  \, g_0\big( \mu(Ee_r,e_s), e_k  \big) \, \, g_0\big( \mu(e_r,e_s), e_k  \big)   \\
	& - \frac14  \, g_0\big( \mu(e_r,Ee_s), e_k  \big) \, \, g_0\big( \mu(e_r,e_s), e_k  \big)   \\ 
	=& \frac14 \, \Big( \mu_{rs}^j \mu_{rs}^i -   \mu_{is}^k \mu_{js}^k -  \mu_{ri}^k \mu_{rj}^k \Big),
\end{align*}
which, by the skew-symmetry of $\mu$, coincides with the formula for $\la \mm_\mu, E \ra $ obtained above.
\end{proof}

%
%
%
%
%
%
%

\subsection{Long-time behaviour of the $\mm$-flow}

\begin{definition}
A left-invariant metric $g$ on a Lie group $G$ with Lie algebra $\g$ is an \emph{algebraic $\mm$-soliton} if its $\mm$ tensor satisfies
\begin{equation}\label{eqn_algmsol}
	\mm(g) = \lambda\, g + g( D \cdot, \cdot) + g(\cdot, D \cdot), \qquad \lambda \in \mathbb{R}, \quad D \in {\rm Der}(\g).
\end{equation}
\end{definition}

On a simply-connected Lie group $G$, algebraic $\mm$-solitons give rise to very special solutions to the $\mm$-flow \eqref{m-flow}. Indeed, let $\varphi_t \in {\rm Aut}(G)$ be the unique automorphism such that $d \varphi_t |_e = e^{-t D} \in {\rm Aut}(\g)$. Then a quick computation shows that a solution to the $\mm$-flow \eqref{m-flow} starting at $g$ is given by $g_t := (- \lambda t + 1) \, \varphi_t^* g$, and therefore $g$ is a soliton solution in the usual sense (it only evolves by scaling and pull-back by diffeomorphisms). 

\begin{theorem}\label{thm_mmflow}
Let $G$ be a Lie group with Lie algebra $\g$. Then, for any initial left-invariant metric $g_0$ the solution to 
\[
	\tfrac{d}{dt} g_t=-{\rm M}(g_t)\,,\qquad g_{|t=0}=g_0 \, ,
\] 
exists for all $t\in [0,\infty)$, and the rescaled metrics $(1 + t)^{-1}g_t $
converge as $t\to \infty$ in Cheeger-Gromov sense to a non-flat algebraic $\mm$-soliton $(\bar G, \bar g)$.
\end{theorem}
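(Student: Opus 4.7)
The plan is to pass to the bracket flow of Section \ref{sec_bf} and exploit the moment map structure afforded by Proposition \ref{prop_mm}. Writing $g_t(\cdot,\cdot) = g_0(A_t\cdot,A_t\cdot)$ and setting $\mu_t := A_t\cdot \mu_0$, the $\mm$-flow becomes $\dot\mu_t = -\pi(\mm_{\mu_t})\mu_t$ with $\mu_0$ the Lie bracket of $\g$. Since $\mm_{\mu_t}$ is $g_0$-symmetric, specialising \eqref{eqn_mm} to $E=\mm_{\mu_t}$ yields
\[
\tfrac{d}{dt}\|\mu_t\|^2 \;=\; -2\la \pi(\mm_{\mu_t})\mu_t,\mu_t\ra \;=\; -8\|\mm_{\mu_t}\|^2 \;\le\; 0,
\]
so $\mu_t$ remains in the compact ball $\{\|\mu\|\le \|\mu_0\|\}$ and the bracket flow is defined for all $t\in[0,\infty)$. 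The bracket flow/metric flow equivalence (Thm.~1.1 of \cite{lauret}) then transfers long-time existence back to $g_t$.

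For the asymptotic statement, I would analyse the family of rescaled brackets. The homogeneity $\mm_{c\mu}=c^2\mm_\mu$, combined with the scaling dictionary $(cA)\cdot\mu_0 = c^{-1}(A\cdot\mu_0)$, shows that the bracket corresponding to $\bar g_t := (1+t)^{-1}g_t$ is $\bar\mu_t := (1+t)^{1/2}\mu_t$; after the reparametrisation $\tau:=\log(1+t)$ it satisfies the autonomous ODE
\[
\tfrac{d}{d\tau}\bar\mu \;=\; \tfrac12\,\bar\mu \,-\, \pi(\mm_{\bar\mu})\bar\mu.
\]
Using the identity $\pi({\rm Id}_\g) = -{\rm Id}$ on $\Lambda^2\g^*\otimes\g$, the equilibria of this ODE are precisely the brackets $\bar\mu$ for which $\mm_{\bar\mu}+\tfrac12{\rm Id}_\g \in {\rm Der}(\bar\mu)$, i.e.\ the algebraic $\mm$-solitons normalised with $\lambda=-\tfrac12$.

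It remains to show that every sequence $\tau_k\to\infty$ admits a subsequence along which $\bar\mu(\tau_k)$ converges in $\Lambda^2\g^*\otimes\g$ to a \emph{non-zero} equilibrium; the Cheeger--Gromov convergence of the metrics is then a routine consequence of the bracket/metric dictionary (see Section \ref{sec_bf}), with $\bar G$ the simply-connected Lie group whose Lie bracket is $\bar\mu_\infty$. For this I would invoke the real GIT machinery of \cite{realGIT,HS07,HSS08}: the scale-invariant functional $F(\mu) := \|\mm_\mu\|^2/\|\mu\|^4$ is the Kempf--Ness norm-squared of the moment map provided by Proposition \ref{prop_mm}, the projectivised bracket flow is essentially its negative gradient flow, and \L{}ojasiewicz-type estimates for real-analytic moment-map squares yield trajectory convergence to a critical point. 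The real Kempf--Ness / Kirwan--Ness theorem then identifies the critical points inside $\overline{{\rm Gl}(\g)\cdot\mu_0}$ with the algebraic $\mm$-solitons realising the minimum of $F$ on the orbit closure, and non-flatness ($\bar\mu_\infty\neq 0$) follows from the fact that $\mu_0$ is a non-trivial bracket and hence not in the null cone of the ${\rm Gl}(\g)$-action. The main technical obstacle is exactly this convergence step: ruling out that $\bar\mu(\tau)$ spirals forever along its orbit or collapses onto the trivial bracket cannot be achieved by elementary ODE arguments, and is precisely where the moment map viewpoint of Proposition \ref{prop_mm} pays off.
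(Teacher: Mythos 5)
Your reduction to the bracket flow and the monotonicity computation $\tfrac{d}{dt}\Vert\mu_t\Vert^2=-8\Vert\mm_{\mu_t}\Vert^2\le 0$ reproduce the paper's argument for long-time existence exactly, and your identification of the equilibria of the rescaled flow with algebraic $\mm$-solitons normalised at $\lambda=-\tfrac12$ is correct. The problems are in the convergence step, which you rightly flag as the main obstacle but then resolve with an argument that does not work. The claim that $\bar\mu_\infty\neq 0$ because ``$\mu_0$ is a non-trivial bracket and hence not in the null cone'' is false: every non-abelian nilpotent bracket \emph{does} lie in the null cone of the ${\rm Gl}(\g)$-action (it degenerates to $0$ under suitable diagonal one-parameter subgroups), and nilpotent groups are precisely the case in which this theorem recovers Lauret's Ricci-flow result, so the statement must — and does — cover brackets in the null cone. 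The paper sidesteps this by normalising by the norm, $\nu_t:=\mu_t/\Vert\mu_t\Vert$, so that any accumulation point automatically has unit norm; non-flatness then follows since $\tr\mm_{\bar\nu}=-\tfrac14\Vert\bar\nu\Vert^2=-\tfrac14\neq 0$.

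Second, convergence of your rescaled bracket $\bar\mu_\tau=(1+t)^{1/2}\mu_t$ requires two independent inputs: convergence of the direction $\nu_t$, which {\L}ojasiewicz does give once one knows (after a time reparametrisation, cf.\ \cite[Lemma~2.5]{ALpluri} and \cite[Lemma~7.2]{realGIT}) that the unit-sphere-normalised flow is the negative gradient flow of the real-analytic functional $F(\nu)=\Vert\mm_\nu\Vert^2/\Vert\nu\Vert^4$; and the precise norm asymptotics $\Vert\mu_t\Vert^2\sim c/t$ with $c>0$, which your autonomous ODE for $\bar\mu$ — not itself a gradient flow — does not provide, and which {\L}ojasiewicz applied to the projectivised flow does not control. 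This second input is recoverable: from $\tfrac{d}{dt}\Vert\mu_t\Vert^2=-8F(\nu_t)\Vert\mu_t\Vert^4$ and $F(\nu_t)\to F(\bar\nu)>0$ one integrates to obtain $\Vert\mu_t\Vert^2\sim\bigl(8F(\bar\nu)\,t\bigr)^{-1}$, which is exactly the estimate the paper imports from the proof of Theorem~A in \cite{ALpluri}. As written, your proposal neither supplies this estimate nor correctly substitutes for it, and the null-cone reasoning cannot replace it.
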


\begin{proof}
Let $(\mu_t)_{t\in I}$, $I\subset \R$, denote the corresponding maximal bracket flow solution satisfying \eqref{bracketflow}, where $P_{\mu_t} = \mm_{\mu_t}$. By the equivalence of the bracket flow and the original flow, 
 it suffices to prove that $\mu_t$ is defined for all $t\in [0,\infty)$. By looking at how the norm of $\mu_t$ evolves, we see that
\[
	\tfrac{d}{dt} \Vert \mu \Vert^2 = 2 \, \la \tfrac{d}{dt} \mu, \mu \ra = -2 \, \la \pi(\mm_\mu) \mu, \mu\ra = - 8 \, \Vert \mm_\mu \Vert^2 \leq 0,
\]
where in the last equality we used Proposition \ref{prop_mm}. It follows by standard ODE results that the solution $\mu_t$ is defined for all positive times. 

The proof of the last part of the statement will follow from three claims. The first one is that the norm-normalized bracket flow $\mu_t / \Vert \mu_t\Vert$ converges to a soliton bracket $\bar \mu$. The second one is that $\Vert \mu_t \Vert \sim t^{-1/2}$, thus up to a constant the metrics corresponding to the normalized brackets $\mu_t / \Vert \mu_t\Vert$ are asymptotic to the family $(1 + t)^{-1}g_t$ (recall that scaling the metric by a factor of $c>0$ is equivalent to scaling the corresponding bracket by $c^{-1/2}$ \cite[$\S$2.1]{Lau13}). The third claim is that convergence of the brackets yields subconvergence in the Cheeger-Gromov sense for the corresponding family of left-invariant metrics. 

In order  to prove the first claim, we recall that by \cite[Lemma~2.5]{ALpluri}, after a time reparameterization, the normalized solution $\nu_t := \mu_t / \Vert \mu_t \Vert$ solves the so called \emph{normalized bracket flow equation} 
\begin{equation}\label{eqn_normbf}
	\tfrac{d}{dt} \nu = -\pi(\mm_\nu + r_\nu \, {\rm Id}_\g) \nu,  
\end{equation}
where $r_\nu := \la \pi(\mm_\nu) \nu, \nu\ra  = 4 \, \Vert \mm_\nu \Vert^2$, and in the last equality we used Proposition \ref{prop_mm}. By \cite[Lemma~7.2]{realGIT} this last flow is (up to a constant time rescaling) the negative gradient flow of the real-analytic functional
\[
	F : \Lambda^2 \g^* \otimes \g \setminus \{ 0\} \to \R, \qquad \nu \mapsto \frac{\Vert \mm_\nu \Vert^2}{\Vert \nu \Vert^4},
\]
see also \cite[Corollary~3.5]{ALpluri} and \cite{lauret3}. By compactness, the family of unit norm brackets $(\nu_t)_{t\in [0,\infty)}$ must have an accumulation point $\bar \nu$. Now {\L}ojasiewicz's theorem on real-analytic gradient flows \cite{Loj63} implies that $\nu(t) \to \bar \nu$ as $t\to\infty$, and in particular $\bar \nu$ is a fixed point of \eqref{eqn_normbf}, that is
\[
\pi(\mm_{\bar\nu} + r_{\bar\nu} \, {\rm Id}_\g)\bar\nu=0.
\]
This directly implies that the corresponding metric is an algebraic $\mm$-soliton.

The second claim is proved in the second paragraph of the proof of \cite[Theorem~A]{ALpluri}. Finally, 
the last claim is a consequence of \cite[Corollary~6.20]{lauretconv}, and the theorem follows. 
\end{proof}

The proof of Theorem \ref{thm_mmflow} shows that the derivation in any algebraic $\mm$-soliton arising as a limit is symmetric, so that $\mm(g)$ satisfies the following simplified equation 
\begin{equation}\label{eqn_algmsolsym}
	\mm(g) = \lambda \, g  + g(D \cdot, \cdot), \qquad D\in  {\rm Der} (\g).	
\end{equation}
In fact, it can be proved directly that \eqref{eqn_algmsol} implies \eqref{eqn_algmsolsym}:



\begin{prop}\label{semiimplies}
On a non-abelian Lie group $G$, every algebraic $\mm$-soliton is expanding ($\lambda < 0$) and satisfies \eqref{eqn_algmsolsym}. 
\end{prop}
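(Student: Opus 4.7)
The plan has two parts: first showing $\lambda<0$, then rewriting the soliton equation in the symmetric form \eqref{eqn_algmsolsym}. Both rely on the moment-map identity of Proposition \ref{prop_mm}.

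For the sign of $\lambda$, I would recast the algebraic soliton condition as the endomorphism identity $\mm_\mu = \lambda\,{\rm Id} + D + D^*$, where $D\in {\rm Der}(\mu)$ and $D^*$ denotes the $g_0$-transpose, and then take the Frobenius inner product of both sides with $\mm_\mu$. Proposition \ref{prop_mm} is applied three times. With $E={\rm Id}$ and $\pi({\rm Id})\mu=-\mu$, I obtain $\tr\mm_\mu = -\tfrac14\Vert\mu\Vert^2$. With $E=D$ and $\pi(D)\mu=0$, I obtain $\la\mm_\mu,D\ra=0$. With $E=D^*$, the $g_0$-symmetry of $\mm_\mu$ forces $\la\mm_\mu,D^*\ra=\la\mm_\mu,D\ra=0$. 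Assembling these three identities yields $\Vert\mm_\mu\Vert^2 = -\tfrac{\lambda}{4}\,\Vert\mu\Vert^2$. The non-abelian hypothesis gives $\Vert\mu\Vert^2>0$, while $\tr\mm_\mu<0$ forces $\mm_\mu\neq 0$ and hence $\Vert\mm_\mu\Vert^2>0$; therefore $\lambda<0$.

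For the symmetric form, I would combine the self-similar character of the flow with the bracket-flow device of Section \ref{sec_bf}. Since $g$ is an algebraic $\mm$-soliton, the corresponding solution is $g_t=c(t)\,\varphi_t^*g$ with $\varphi_t\in {\rm Aut}(G)$, and the differential $d\varphi_t|_e$ lies in ${\rm Aut}(\g)$, which coincides with the stabilizer of $\mu_0$ under the ${\rm Gl}(\g)$-action. Consequently the associated bracket-flow curve reduces to a pure rescaling $\mu_t=h(t)\,\mu_0$ for a positive scalar function $h$ with $h(0)=1$. Differentiating $\mu_t = h(t)\mu_0$ at $t=0$ and substituting into the bracket-flow ODE forces $\pi(\mm_{\mu_0})\mu_0$ to be a scalar multiple of $\mu_0$; together with the identity $\pi({\rm Id})\mu_0=-\mu_0$, this produces a unique $\alpha\in\R$ with $\mm_{\mu_0}+\alpha\,{\rm Id}\in {\rm Der}(\mu_0)$. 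Since both $\mm_{\mu_0}$ and ${\rm Id}$ are $g_0$-symmetric, this derivation is symmetric, and translating back to the original metric $g$ delivers \eqref{eqn_algmsolsym} with symmetric derivation $D' := \mm_g - \lambda\,{\rm Id}$ (equivalently $D+D^*$, which is thereby revealed to be a derivation of $\mu$).

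The delicate step is the bookkeeping needed to match the scalar $\alpha$ produced by the bracket-flow computation with $-\lambda$, so that the constant in \eqref{eqn_algmsolsym} coincides with the $\lambda$ of \eqref{eqn_algmsol}. Concretely, one has to compute $h(t) = c(t)^{-1/2}$ from $c(t)=1-\lambda t$, match $\dot h(0)$ against the value predicted by the bracket-flow ODE at $\mu_0$, and track the factor conventions of Section \ref{sec_bf} carefully; the outcome is the algebraically clean statement $\pi(D^*)\mu_0 = 0$, i.e.\ $D^*\in{\rm Der}(\mu_0)$, which (together with part one) completes the proof.
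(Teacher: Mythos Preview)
Your argument for $\lambda<0$ is correct and in fact cleaner than the paper's: taking the inner product of $\mm_\mu=\lambda\,{\rm Id}+D+D^*$ with $\mm_\mu$ and using the moment-map identity with $E={\rm Id}$ and $E=D$ (plus the symmetry of $\mm_\mu$ to handle $D^*$) gives $\Vert\mm_\mu\Vert^2=-\tfrac{\lambda}{4}\Vert\mu\Vert^2$ in one stroke. The paper instead first proves \eqref{eqn_algmsolsym}, then separates into the cases $D=0$ and $D\neq0$, using a Cauchy--Schwarz argument in the latter. Your route avoids all of that.

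However, your derivation of \eqref{eqn_algmsolsym} is circular. You assert that the bracket-flow curve is a pure rescaling $\mu_t=h(t)\mu_0$, but this is \emph{equivalent} to what you are trying to prove. From $\mm_{\mu_0}=\lambda\,{\rm Id}+D+D^*$ and $\pi(D)\mu_0=0$ one gets $\pi(\mm_{\mu_0})\mu_0=-\lambda\mu_0+\pi(D^*)\mu_0$; hence $\dot\mu_t|_{t=0}$ is proportional to $\mu_0$ if and only if $\pi(D^*)\mu_0$ is, and for non-abelian $\g$ this forces $\pi(D^*)\mu_0=0$ (a nonzero multiple of ${\rm Id}$ is never a derivation). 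So ``$\mu_t=h(t)\mu_0$'' already presupposes $D^*\in{\rm Der}(\mu_0)$. Your appeal to the self-similar metric solution $g_t=c(t)\varphi_t^*g$ does not rescue this: writing $g_t=g_0(A_t\cdot,A_t\cdot)$ only determines $A_t$ up to left multiplication by $O_t\in{\rm O}(g_0)$, so the Lauret curve satisfies $A_t=O_t\sqrt{c(t)}\,d\varphi_t$ and one gets $\mu_t=c(t)^{-1/2}\,O_t\cdot\mu_0$, with no a priori reason for $O_t$ to fix $\mu_0$.

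The paper closes this gap by a direct algebraic computation: apply the moment-map identity \eqref{eqn_mm} with $E=[D,D^t]$. Using $\pi([D,D^t])=[\pi(D),\pi(D^t)]$ and $\pi(D)\mu=0$ one obtains $4\,\tr\mm_g[D,D^t]=\Vert\pi(D^t)\mu\Vert^2$, while the soliton equation forces the left-hand side to vanish (traces of commutators). Hence $\pi(D^t)\mu=0$ and $D^t\in{\rm Der}(\g)$, so $D+D^t$ is a symmetric derivation and \eqref{eqn_algmsolsym} holds with the \emph{same} $\lambda$.
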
{}

\begin{proof}
It is enough to show that if a left-invariant metric $g$ satisfies \eqref{eqn_algmsol}
then $D^t$ is also a derivation, where the transpose $(\cdot)^t$ is with respect to $g$. In the notation from  Section \ref{sec_mm}, if one chooses $g$ as background metric, then $A = {\rm Id}$ and Proposition \ref{prop_mm} implies 
\begin{equation}\label{eqn_mmgpi}
	\tr \mm_g E = \frac14 \, \la \pi(E)\mu, \mu \ra, \qquad \forall E\in {\rm End}(\g).
\end{equation}
By setting $E = [D, D^t]$, and using that $\pi$ is a Lie algebra homomorphism, that $\pi(E^t) = \pi(E)^t$, and that $\pi(D)\mu = 0$ (which by definition is equivalent to $D$ being a derivation), we obtain
\begin{align*}
	4 \, \tr \mm_g [D,D^t ] &=  \la \pi\big([D,D^t] \big) \mu, \mu \ra = \la \big[ \pi(D), \pi(D^t) \big] \mu, \mu \ra \\
	&= \la  \pi(D) \pi(D)^t \mu, \mu \ra - \la \pi(D)^t \pi(D) \mu,\mu \ra = \big\Vert  \pi(D^t)\mu  \big\Vert^2 
\end{align*}

On the other hand, \eqref{eqn_algmsol} is equivalent to 
\begin{equation}\label{eqn_mmsemiop}
	\mm_g = \lambda \, {\rm Id}_\g  +   D +  D^t .
\end{equation}
Putting this together with the above equation yields
\[
	\big\Vert  \pi(D^t)\mu  \big\Vert^2 
	= 4 \,  \lambda \tr [D,D^t] + 4 \tr D [D,D^t] + 4 \tr D^t [D,D^t] = 0,
\]
which implies that $D^t$ is a derivation of $\g$, as required.

Finally, to see that $\lambda < 0$  we first assume  that $D = 0$. In such a case \eqref{eqn_algmsolsym} and \eqref{eqn_mmgpi} yield
\[
	n \, \lambda = \tr \mm_g  = \frac14 \langle \pi({\rm Id} ) \mu, \mu \rangle = -\frac14 \Vert \mu \Vert^2 < 0,
\]
since $\pi({\rm Id}) \mu = - \mu$. If on the contrary $D \neq 0$, then
\begin{equation}\label{eqn_trD}
	\lambda \, \tr D  + \tr D^2 = \tr \mm_g D  = \frac14 \langle \pi(D) \mu,\mu \rangle = 0.
\end{equation}
Using that $\tr D^2 = \tr D D^t > 0$, the claim will follow once we show that $\tr D > 0$. To that end, notice that by tracing \eqref{eqn_algmsolsym} we obtain 
\[
	\lambda = -\frac1{n} \Big( \frac14 \Vert\mu\Vert^2 + \tr D \Big),
\]
and substituting this into \eqref{eqn_trD} yields
\[
	 \tr D^2 - \frac1{n} (\tr D)^2 = \frac1{4n} \Vert \mu \Vert^2 \tr D.
\]
The left-hand-side is non-negative by Cauchy-Schwarz, with equality if and only if $D = d \, {\rm Id}$, $d\neq 0$. Since $D$ is a derivation and $\g$ is non-abelian, we cannot have equality, hence $\tr D > 0$ as desired. 
\end{proof}

\subsection{Uniqueness and static metrics}

In order to characterize the groups which admit algebraic $\mm$-soliton metrics we will need the following lemma due to Dotti: 

\begin{lemma}\cite{Dotti88}\label{lem_dotti}
Let $G$ be a Lie group with Lie algebra $(\g, \mu)$, left-invariant metric $g$, and consider an abelian ideal $\mathfrak{i} \subset \g$.
  Then, 
\[
	\tr_g \mm(g)|_{\mathfrak{i} \times \mathfrak{i}} \geq 0.
\]
In particular, if $G$ admits a left-invariant metric $g$ with $\mm(g) < 0$, then $G$ is semisimple.
\end{lemma}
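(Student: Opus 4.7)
The strategy is a direct bookkeeping computation using \eqref{Mmu}, exploiting the two structural assumptions on $\mathfrak{i}$ (abelian and ideal) to cancel the indefinite term against part of the positive term.

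Pick an orthonormal basis $\{X_1,\dots,X_n\}$ of $\g$ adapted to the splitting $\g=\mathfrak{i}\oplus\mathfrak{i}^{\perp}$, so that $\{X_1,\dots,X_m\}$ is an orthonormal basis of $\mathfrak{i}$. Writing $C_{rs}^{t}:=g(\mu(X_r,X_s),X_t)$, the identity \eqref{Mmu} gives
\[
\tr_{g}\mm(g)\big|_{\mathfrak{i}\times\mathfrak{i}}
=\sum_{i=1}^{m}\mm(g)(X_i,X_i)
=-\tfrac{1}{2}\sum_{i=1}^{m}\sum_{r,t}(C_{ir}^{t})^{2}+\tfrac{1}{4}\sum_{i=1}^{m}\sum_{r,s}(C_{rs}^{i})^{2}.
\]

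Next, I would split each sum according to whether the indices run in $\{1,\dots,m\}$ or in $\{m+1,\dots,n\}$, and feed in the two hypotheses. Because $\mathfrak{i}$ is \emph{abelian}, $C_{ij}^{t}=0$ for $i,j\le m$ and all $t$. Because $\mathfrak{i}$ is an \emph{ideal}, $\mu(\mathfrak{i},\g)\subset\mathfrak{i}$, so for $i\le m$ and any $r$ the vector $\mu(X_i,X_r)$ lies entirely in $\mathfrak{i}$, i.e.\ $C_{ir}^{t}=0$ whenever $t>m$. These two facts together give
\[
\sum_{i=1}^{m}\sum_{r,t}(C_{ir}^{t})^{2}
=\sum_{i=1}^{m}\sum_{r=m+1}^{n}\sum_{t=1}^{m}(C_{ir}^{t})^{2}.
\]
On the positive side, split the sum $\sum_{r,s}(C_{rs}^{i})^{2}$ for fixed $i\le m$ into the block $r,s\le m$ (which vanishes by abelianness), the mixed block (where $\{r,s\}$ has exactly one index in $\{1,\dots,m\}$), and the block $r,s\ge m+1$. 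Using the antisymmetry of $\mu$ and the ideal property again, the mixed block contributes $2\sum_{r=m+1}^{n}\sum_{s=1}^{m}(C_{sr}^{i})^{2}$, which after summing over $i\le m$ exactly equals $2\sum_{i=1}^{m}\sum_{r=m+1}^{n}\sum_{t=1}^{m}(C_{ir}^{t})^{2}$ in view of $C_{sr}^{i}=-C_{rs}^{i}$. Hence the mixed contribution cancels the negative term precisely, leaving
\[
\tr_{g}\mm(g)\big|_{\mathfrak{i}\times\mathfrak{i}}
=\tfrac{1}{4}\sum_{i=1}^{m}\sum_{r,s=m+1}^{n}(C_{rs}^{i})^{2}
=\tfrac{1}{4}\sum_{r,s=m+1}^{n}\bigl|P_{\mathfrak{i}}\,\mu(X_r,X_s)\bigr|_{g}^{2}\ge 0,
\]
which is the first statement. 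The bookkeeping of signs and of which indices can range where is the only real obstacle; once one writes out the adapted basis it is essentially forced.

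For the \emph{in particular}: suppose $\mm(g)<0$ and that $\g$ is not semisimple. Then its radical $\mathfrak{r}\neq 0$ is solvable, and the last nonzero term of the derived series of $\mathfrak{r}$ is a nonzero abelian ideal $\mathfrak{i}$ of $\g$. The first part forces $\tr_{g}\mm(g)|_{\mathfrak{i}\times\mathfrak{i}}\ge 0$, contradicting the strict negative-definiteness of $\mm(g)$ on the nonzero subspace $\mathfrak{i}$. Therefore $\g$ is semisimple.
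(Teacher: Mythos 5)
Your argument is correct and is essentially the paper's own proof: both expand $\tr_g\mm(g)|_{\mathfrak{i}\times\mathfrak{i}}$ in an adapted orthonormal basis, use the abelian and ideal hypotheses to see that the negative term is exactly cancelled by the mixed block of the positive term, and are left with $\tfrac14\sum\bigl|P_{\mathfrak{i}}\,\mu(X_r,X_s)\bigr|_g^2\ge 0$ over the complementary indices. The deduction of semisimplicity via the derived series of the radical is the standard justification of the fact the paper invokes without proof.
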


\begin{proof}
Let $\{Z_i \}$ be an orthonormal bases of $\mathfrak{i}$ and extend it to an orthonormal basis $\{ Z_i\} \cup \{ Y_j\}$ of $\g$. Since $\mu(\g,\mathfrak{i}) \subset \mathfrak{i}$, $\mu(\mathfrak{i}, \mathfrak{i}) = 0$,  formula \eqref{Mmu} for $X = Y = Z \in \mathfrak{i}$ can be rewritten as
\begin{align*}
	\mm(Z, Z) &= -\frac12 g(\mu(Z,Y_j), Z_i)g(\mu(Z,Y_j), Z_i)  + \frac12 g(\mu(Z_i,Y_j), Z) g(\mu(Z_i,Y_j), Z)  \\
		& \quad + \frac14 g(\mu(Y_j, Y_k), Z) g(\mu(Y_j, Y_k), Z).
\end{align*}
Summing as $Z$ ranges through the basis $\{ Z_i\}$ we get
\[
	\tr_g \mm(g)|_{\mathfrak{i} \times \mathfrak{i}} = \mm(Z_i,Z_i) = \frac14 g(\mu(Y_j,Y_k), Z_i)g(\mu(Y_j,Y_k), Z_i)  \geq 0.
\]
Finally, the last claim follows from the fact that a Lie algebra is semisimple if and only if it has no abelian ideals. 
\end{proof}

Now we introduce the notion of `canonical metric' of a semisimple Lie algebra.  Any semisimple Lie algebra $\g$ admits a \emph{Cartan decomposition} $\g = \mathfrak{k} \oplus \mathfrak{p}$, i.e.\ the following bracket relations are satisfied
\[
	[\kg,\kg] \subset \kg, \quad [\kg,\pg] \subset \pg, \qquad [\pg,\pg] \subset \kg,
\] 
and in addition the Killing form ${\rm B}$ of $\g$ is negative definite on $\kg$, positive definite on $\pg$ and ${\rm B}(\kg,\pg) = 0$. By switching the sign of ${\rm B}$ on $\kg$ we thus obtain an inner product on $\g$.

\begin{definition}\label{def_canonical}
A left-invariant metric on a semisimple Lie group $G$ with Lie algebra $\g$ is a \emph{canonical metric} if it induces on $\g$ the above defined inner product.
\end{definition}
The construction described above depends of course on the choice of Cartan decomposition, but since any two Cartan decompositions differ only by an automorphism (see e.g. \cite{Knapp}), any two canonical metrics $\ip$ and $\ip'$ on $\g$ are related by
\[
	\ip = \langle \varphi  \, \cdot, \varphi \, \cdot \rangle', \qquad \varphi \in {\rm Aut}(\g).
\]
Recall that the left-invariant metrics induced on $G$ by two such inner products on $\g$ are isometric, hence the canonical metric on a semisimple Lie group is unique up to isometry.

In the particular case of a complex semisimple Lie algebra  $\g$, a Cartan decomposition is obtained by considering a compact real form $\g_\R$ and setting $\kg = \g_\R$, $\pg = i \g_\R$, see \cite[Thm. 6.11]{Knapp}. Recall that $\g_\R$ is a real form of $\g$ if
\[
	\g = \g_\R \oplus i \, \g_\R
\]
and the Lie bracket of $\g$ is the $\C$-linear extension of the Lie bracket of $\g_\R$. The compact real Lie algebra $\g_\R$ is also semisimple and its Killing form ${\rm B}_{\g_\R}$ is negative definite. Clearly, the Killing form ${\rm B}_\g$ of $\g$ is negative definite on $\g_\R$, positive definite on $i \,\g_\R$, and $ {\rm B}_\g(\g_\R, i \g_\R) = 0$. By switching the sign on $\g_\R$ we thus obtain a positive definite inner product on $\g$.

The following result is known already, but we have decided to provide a proof in our context for convenience of the reader.

\begin{theorem}\cite{HS07,LauretDGA}\label{thm_mmsoliton}
A Lie group $G$ with Lie algebra $\g$ has, up to homotheties, at most one left-invariant metric $g$ satisfying the algebraic $\mm$-soliton equation
\[
 \mm(g)=\lambda\, g+g(D\cdot,\cdot)\,, \qquad \lambda\in\R, \quad D\in {\rm Der}(\g).
\] 
Moreover, if $G$ is not abelian, the Einstein-type equation 
		\[
		 	\mm(g)=\lambda\, g\,, \qquad \lambda\in\R
		\]
has a solution if and only if $G$ is semisimple, and in this case $\lambda < 0$ and a solution is given by the `canonical metric' induced by the Killing form of $\g$.
\end{theorem}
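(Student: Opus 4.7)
The plan is to exploit the moment map characterization of $\mm$ from Proposition \ref{prop_mm}: the algebraic soliton equation becomes a critical point condition on $\operatorname{Gl}(\g)$-orbits in the variety of brackets, and uniqueness then follows from a standard real GIT result. The forward direction of the existence characterization is an immediate consequence of Proposition \ref{semiimplies} and Lemma \ref{lem_dotti}, while the converse reduces to a direct computation on the canonical metric using the Cartan decomposition.

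For uniqueness, the abelian case is trivial since $\mm \equiv 0$, so I assume $G$ is non-abelian. By Proposition \ref{semiimplies}, every algebraic $\mm$-soliton satisfies $\mm_g = \lambda\,\mathrm{Id}_\g + D$ with $\lambda < 0$ and $D \in \operatorname{Der}(\g)$ $g$-symmetric. Switching to the bracket picture of Section \ref{sec_bf}, I fix a background inner product on $\g$ and view left-invariant metrics on $G$ as brackets in the orbit $\operatorname{Gl}(\g) \cdot \mu_0$. Using the moment map identity \eqref{eqn_mm} together with the fact that the annihilator of $\mu$ in $\mathrm{End}(\g)$ under $\pi$ is exactly $\operatorname{Der}(\mu)$, one verifies that the above equation is equivalent, up to rescaling, to $\mu$ being a critical point of the real-analytic functional $F(\nu) = \Vert \mm_\nu \Vert^2 / \Vert \nu \Vert^4$ restricted to its $\operatorname{Gl}(\g)$-orbit. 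The uniqueness theorem for moment-map critical points in real GIT (of Kempf--Ness / Richardson--Slodowy type; see \cite{HSS08, realGIT}) asserts that any two critical points in a single $\operatorname{Gl}(\g)$-orbit lie in the same $\operatorname{O}(\g)$-orbit, which upon translating back to the metric side yields uniqueness of algebraic $\mm$-solitons on $G$ up to isometry, Lie algebra automorphism and scaling, i.e., up to homotheties.

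For the Einstein-type characterization, if $G$ is non-abelian and $\mm(g) = \lambda g$, then Proposition \ref{semiimplies} gives $\lambda < 0$, so $\mm(g)$ is negative-definite; Lemma \ref{lem_dotti} then forces $\g$ to contain no abelian ideal, hence $\g$ is semisimple. Conversely, let $g$ be the canonical metric on a semisimple $\g = \kg \oplus \pg$ with Cartan involution $\theta$. Its defining property is $\operatorname{ad}(X)^t = -\operatorname{ad}(\theta X)$, where the transpose is taken with respect to $g$. Picking a $g$-orthonormal basis $\{X_r\} = \{K_i\}\cup\{P_j\}$ adapted to the decomposition, the first term in \eqref{Mmu} becomes
\[
	\sum_r g\big(\mu(X,X_r),\mu(Y,X_r)\big) = \tr\!\big(\operatorname{ad}(X)^t\operatorname{ad}(Y)\big) = -{\rm B}(\theta X, Y) = g(X,Y).
\]
For the second term, the same identity together with the computation
\[
	\sum_r \operatorname{ad}(X_r)\operatorname{ad}(\theta X_r) = \sum_i \operatorname{ad}(K_i)^2 - \sum_j \operatorname{ad}(P_j)^2 = -C_{\operatorname{ad}} = -\mathrm{Id}_\g,
\]
where $C_{\operatorname{ad}}$ is the Casimir of the adjoint representation relative to the Killing form (which acts as $\mathrm{Id}$ on each simple factor), yields $\tfrac14 g(X,Y)$. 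Altogether $\mm(g) = -\tfrac12 g + \tfrac14 g = -\tfrac14 g$, so $g$ is static with $\lambda = -\tfrac14 < 0$.

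The main technical obstacle is the uniqueness step: one needs to pinpoint the precise real GIT uniqueness statement that applies to the $\operatorname{Gl}(\g)$-action on $\Lambda^2\g^*\otimes\g$ and to verify that the algebraic $\mm$-soliton equation genuinely coincides with the critical point condition for $F$ on the orbit. Once this identification is made, the remainder is a clean consequence of the moment map identity \eqref{eqn_mm} and the standard Casimir calculation above.
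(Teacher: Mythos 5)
Your proposal is correct and follows essentially the same route as the paper: the moment-map identity of Proposition \ref{prop_mm} converts the algebraic soliton equation into the critical-point condition for $F(\nu)=\Vert \mm_\nu\Vert^2/\Vert\nu\Vert^4$, uniqueness is delegated to the real GIT statement that critical points in a single ${\rm Gl}(\g)$-orbit lie in one ${\rm O}(\g)$-orbit, and the existence characterization combines the sign of $\tr_g\mm(g)$ with Dotti's lemma and the canonical-metric computation. The only cosmetic difference is that you evaluate $\mm$ on the canonical metric via $\ad(X)^t=-\ad(\theta X)$ and the Casimir identity $\sum_r\ad(X_r)\ad(\theta X_r)=-{\rm Id}_\g$, whereas the paper splits into $X\in\kg$ and $X\in\pg$ and uses skew-symmetry/symmetry of $\ad_X$ directly; both give $\mm(g)=-\tfrac14 g$.
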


\begin{proof}
Fixing $g$ as background metric, the algebraic soliton equation is equivalent to
\[
	\mm_\mu = \lambda \, {\rm Id}_\g + D, \qquad D\in {\rm Der}(\g).
\]
From the proof of Theorem \ref{thm_mmflow} it follows that $\mu$ is an algebraic $\mm$-soliton if and only if it is a critical point of the functional $F(\mu) = \Vert\mm_\mu \Vert^2 / \Vert\mu\Vert^4$ (cf.~ also \cite[Proposition~3.2]{lauretJFA}).  Critical points for the norm of the moment map have been extensively studied in geometric invariant theory, and they enjoy a number of nice properties which are analogous to those satisfied by minimal vectors (i.e. the zeroes of the moment map). In particular, by the uniqueness result \cite[Corollary~9.4]{realGIT} two critical points in a fixed orbit ${\rm Gl}(\g) \cdot \mu$ must lie in fact in the same ${\rm O}(\g)$-orbit. Since brackets in the same ${\rm O}(\g)$-orbit correspond to isometric left-invariant metrics on $G$, this finishes the proof of the first claim.

Regarding the second claim, for the canonical metric $g$ on a semisimple Lie algebra with Cartan decomposition $\g = \kg \oplus \pg$ we have that $\ad_X$ is skew-symmetric for $X\in \kg$ and symmetric for $X\in \pg$. Thus if $\{ X_k\}$ is an orthonormal basis for $\g$ which is the union of basis for $\kg$ and $\pg$, then for $X\in \kg$ by \eqref{Mmu} we have
\begin{align*}
	\mm(g)(X,X) &= -\frac12 g(\mu(X,X_k), \mu(X,X_k)) + \frac14 g(\mu(X_j, X_k), X) g(\mu(X_j, X_k), X ) \\
		&= -\frac12 \tr \ad_X \ad_X^t + \frac14 g(X_k, \mu(X_j,X)) g(X_k, \mu(X_j,X)) \\
		& = -\frac14 \tr \ad_X \ad_X^t = \frac14 {\rm B}(X,X) = -\frac14 g(X,X),
\end{align*}
and analogously for $X\in \pg$. Thus, $\mm(g) = -\tfrac{1}{4} g$. 

Conversely, if $G$ non-abelian and admits a metric satisfying $\mm(g) = \lambda g$ then $\lambda < 0$, since 
\[
	\tr_g \mm(g) = -\frac14 \Vert \mu\Vert^2 \leq 0.
\]
In particular, $\mm(g) < 0$, and by Lemma \ref{lem_dotti} the group $G$ is semisimple.	
\end{proof}

\subsection{Applications to the HCF}
Theorem \ref{main_cxHCF}  follows by combining Corollary \ref{M-flow} with Theorem \ref{thm_mmflow}. We point out that the assumption on $G$ to be unimodular cannot in general dropped in order to have a long-time solution to the HCF (see an example in the next section). Regarding solitons and static metrics, Theorem \ref{main_cxsoliton} is an immediate consequence of  Corollary \ref{M-flow} and Theorem \ref{thm_mmsoliton}.

 Theorem \ref{main_cxHCF} yields the following result for complex parallelizable  manifolds.   

\begin{cor}
Let $(M,g_0)$ be a compact Hermitian manifold and let $g_t$ be the maximal solution to the HFC starting at $g_0$. Assume that the 
the holonomy group of the Chern connection of $g_0$ is trivial. Then the holonomy of the Chern connection of $g_t$ is trivial for any $t$, $g_t$ is immortal and satisfies 
$$
\partial_tg_t=-{\rm Ric}^{1,1}(g_t)\,. 
$$
\end{cor}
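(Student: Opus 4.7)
My plan is to proceed via a classical structural reduction to a complex Lie group. The hypothesis of trivial Chern holonomy on the compact Hermitian manifold $(M,g_0)$ gives a global Chern-parallel unitary frame; since the $(0,1)$-part of the Chern connection on $T^{1,0}M$ agrees with $\bar\partial$, this frame is also holomorphic. By the Boothby--Wang theorem on complex parallelizable manifolds one then has $M = \Gamma\backslash G$, where $G$ is a simply-connected complex Lie group, $\Gamma \subset G$ is a discrete subgroup acting by left translations, and $g_0$ lifts to a left-invariant Hermitian metric $\tilde g_0$ on $G$. As a first observation, the existence of a cocompact discrete subgroup forces $G$ to be unimodular.

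Next I would invoke Theorem \ref{main_cxHCF}: the HCF starting at $\tilde g_0$ on the complex unimodular Lie group $G$ admits a left-invariant solution $(\tilde g_t)_{t\in [0,\infty)}$ satisfying $\partial_t \tilde g_t = -{\rm Ric}^{1,1}(\tilde g_t)$. Since $\Gamma$ acts by left translations (which are isometries of any left-invariant metric), each $\tilde g_t$ descends to a Hermitian metric $g_t'$ on $M$, and the family $(g_t')_{t\in[0,\infty)}$ solves the HCF on $M$ with initial datum $g_0$. By the short-time existence and uniqueness of HCF on compact Hermitian manifolds established in \cite{ST2}, this descended family coincides with the maximal solution $g_t$ on its interval of definition, which must therefore equal $[0,\infty)$. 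This simultaneously gives immortality of $g_t$ and the evolution equation $\partial_t g_t = -{\rm Ric}^{1,1}(g_t)$.

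To address preservation of trivial Chern holonomy: on any complex Lie group the mixed brackets $\mu(Z_r, Z_{\bar s})$ vanish, so every term in the formula for $\Omega_{k\bar l i\bar j}$ from Section \ref{firstsection} vanishes identically. Moreover, from the Christoffel formulas $\Gamma_{\bar k l}^r = \mu_{\bar k l}^r$ and $\Gamma_{kr}^j = -\mu_{k\bar j}^{\bar r}$ one sees directly that $\nabla Z_l = 0$ for every left-invariant vector field $Z_l$; in other words, any left-invariant unitary frame on $G$ is Chern-parallel for every left-invariant Hermitian metric. Being left-invariant, such a frame is $\Gamma$-invariant and descends to a global Chern-parallel unitary frame on $(M, g_t)$, proving that the Chern holonomy of $g_t$ is trivial for every $t\ge 0$.

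The only genuinely non-routine ingredient in this plan is the initial appeal to the Boothby--Wang structure theorem identifying a compact Hermitian manifold with trivial Chern holonomy as a quotient of a complex Lie group by left translations, with the lifted metric left-invariant. Once that identification is in hand, the corollary reduces entirely to results already established in the paper together with the standard uniqueness of parabolic flows on compact manifolds.
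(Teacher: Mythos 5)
Your proposal is correct and follows essentially the same route as the paper: reduce via Boothby's structure theorem (your attribution ``Boothby--Wang'' is a slight misnomer --- the relevant result is Boothby's theorem on compact Hermitian manifolds with zero Chern curvature, i.e.\ the quotient-of-a-complex-unimodular-Lie-group characterization) to a left-invariant problem on $\Gamma\backslash G$, apply Theorem \ref{main_cxHCF}, and identify the descended solution with the maximal one by uniqueness on compact manifolds. Your explicit verification that every left-invariant unitary frame is Chern-parallel (so trivial holonomy is preserved along the flow) is a correct detail that the paper leaves implicit.
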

\begin{proof}
We recall that a compact complex manifold admits a Hermitian metric $g_0$ with trivial Chern holonomy  if and only if it is the compact quotient of a complex unimodular Lie group $G$ by a lattice $\Gamma$ and $g_0$ lifts to a left-invariant metric $\hat g_0$ on $G$ \cite{Boothby}. 

Let $\hat g_t$  be the left-invariant HCF solution on $G$ starting at $\hat g_0$ (see the discussion before Theorem \ref{main_cxHCF}). By Corollary \ref{M-flow} and Theorem \ref{thm_mmflow}, $\hat g_t$ is defined for all $t\in (-\varepsilon, \infty)$ for some $\varepsilon>0$. Moreover, $\hat g_t$ is still invariant under the action of $\Gamma \leq G$, and thus induces an HCF solution on $M = \Gamma\backslash G$ which by uniqueness coincides with $g_t$. The corollary now follows.
\end{proof}

Moreover, according to the analysis of solitons to the ${\rm M}$-flow in section 3.2, we say that a left-invariant Hermitian metric $g$ on a Lie group $G$ with a left-invariant complex structure is an {\em algebraic HCF soliton} if 
$$
K(g) = \lambda\, g + g( D \cdot, \cdot) + g(\cdot, D \cdot), \qquad \lambda \in \mathbb{R}, \quad D \in {\rm Der}(\g)\,.
$$
On a simply-connected Lie group $G$ an {\em algebraic HCF soliton} is a soliton in the usual way.  Indeed, if $\varphi_t \in {\rm Aut}(G)$ 
is the unique automorphism such that $d \varphi_t |_e = e^{-t D} \in {\rm Aut}(\g)$, then $g_t := (1- \lambda t ) \, \varphi_t^* g$ solves HCF. Proposition \ref{semiimplies} implies the following 
  
\begin{prop}
Every algebraic   {\rm HCF} soliton $g$ on a non-abelian complex unimodular Lie group is expanding and satisfies 
$$
K(g) = \lambda\, g + g( D \cdot, \cdot), \qquad \lambda \in \mathbb{R}, \quad D \in {\rm Der}(\g)\,.
$$
\end{prop}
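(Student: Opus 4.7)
The plan is to reduce the statement directly to Proposition \ref{semiimplies}. The key observation is that on a complex unimodular Lie group the HCF tensor $K$ coincides with the tensor ${\rm M}$, so the HCF algebraic soliton equation is nothing but the algebraic ${\rm M}$-soliton equation already treated in Proposition \ref{semiimplies}.

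More precisely, I would proceed as follows. First, recall that on a complex unimodular Lie group the mean curvature vector $H$ vanishes (since $\tr \ad_X = 0$ for every $X\in \g$). By Lemma \ref{M11} this gives ${\rm Ric}^{1,1} = {\rm M}$, and by Proposition \ref{prop_unimod} also $K(g) = {\rm Ric}^{1,1}(g)$; combining these yields $K(g) = {\rm M}(g)$ (this is precisely the content of Corollary \ref{M-flow}).

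Second, with this identification the algebraic HCF soliton equation
\[
K(g) = \lambda\, g + g(D\,\cdot,\cdot) + g(\cdot, D\,\cdot), \qquad \lambda\in\R,\ D\in{\rm Der}(\g),
\]
becomes the algebraic $\mm$-soliton equation \eqref{eqn_algmsol}. Thus $g$ is an algebraic $\mm$-soliton on the non-abelian Lie group $G$.

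Finally, applying Proposition \ref{semiimplies} we conclude that $\lambda < 0$ and that $g$ satisfies the simplified equation
\[
{\rm M}(g) = \lambda\, g + g(D\,\cdot,\cdot),
\]
which, again using $K = {\rm M}$, gives the desired form for $K(g)$. Since every step is a direct invocation of earlier results, there is essentially no obstacle; the only thing worth double-checking is that the unimodularity hypothesis is indeed what is needed to pass from the HCF soliton equation to the $\mm$-soliton equation, which is exactly guaranteed by Corollary \ref{M-flow}.
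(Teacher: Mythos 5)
Your proposal is correct and follows exactly the paper's route: the paper derives this proposition as an immediate consequence of Proposition \ref{semiimplies} together with the identification $K(g)={\rm M}(g)$ on complex unimodular Lie groups (Corollary \ref{M-flow}). Your extra remark tracing the identification back through Lemma \ref{M11} and Proposition \ref{prop_unimod} is a faithful unpacking of that same argument, not a different one.
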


In Proposition \ref{non-unimod} below we shall show that the unimodular assumption is necessary.

\section{HCF on $3$-dimensional complex Lie groups}

Recall that there exist exactly three non-abelian unimodular simply-connected complex Lie groups: ${\rm SL}(2,\C)$, 
the $3$-dimensional Heisenberg Lie group $H_3(\mathbb C)$ and a solvable Lie group $S_{3,-1}$ (see e.g. \cite{OV}). 

\medskip 

\subsection{ ${\rm SL}(2,\C)$. } This is a simple  Lie group and admits a left-invariant $(1,0)$-frame $\{Z_1,Z_2,Z_3\}$ such that 
\[
\mu(Z_1,Z_2)=Z_3\,,\quad \mu(Z_1,Z_3)=-Z_2\,,\quad \mu(Z_2,Z_3)=Z_1\,. 
\]
In matrix notation we can consider as frame $\{Z_1,Z_2,Z_3\}$  
$$
Z_1=\frac12\left(\begin{array}{cc} 
0 & i\\
i& 0
\end{array}
\right)\,,\quad 
Z_2=\frac12 \left(\begin{array}{cc} 
0 & 1\\
-1& 0
\end{array}
\right)\,,\quad 
Z_3=\frac12 \left(\begin{array}{cc} 
-i & 0\\
0 & i
\end{array}
\right)\,.
$$
A direct computation yields that the \lq\lq standard\rq\rq\,  metric
\[
g_{\rm std}= \zeta^1 \odot \bar \zeta^1 + \zeta^2 \odot \bar \zeta^2 + \zeta^3 \odot \bar \zeta^3\,,
\]
is static with $\lambda=-\frac32$, in accordance with  Theorem \ref{main_cxsoliton}. Here $\{\zeta^k\}$ is the dual frame to $\{Z_k\}$.\medskip

Let us call a left-invariant metric $g$ \emph{diagonal} if it can be written as  
\begin{equation}\label{eqn_gdiag}
	g = a \ \zeta^1 \odot \bar \zeta^1 + b \ \zeta^2\odot \bar \zeta^2 + c \ \zeta^3 \odot \bar \zeta^3, \qquad a,b,c > 0.
\end{equation}
Now, we study the behavior of  HCF on ${\rm SL}(2,\C)$ starting at a diagonal metric 
\[
	g_0 = a_0 \ \zeta^1 \odot \bar \zeta^1 + b_0 \ \zeta^2 \odot \bar \zeta^2 + c_0 \ \zeta^3 \odot \bar \zeta^3\,.
\]
For an arbitrary diagonal metric $g$ as in \eqref{eqn_gdiag} we have 
\[
		K(g) =  -\frac{{-a^2+b^2+c^2}}{2bc} \, \zeta^1 \odot \bar \zeta^1 - \frac{a^2-b^2+c^2}{2ac} \, \zeta^2 \odot \bar \zeta^2 -\frac{a^2+b^2-c^2}{2ab} \, \zeta^3\odot \bar \zeta^3\,.
\]
Thus, the HCF flow is governed by the following ODE system 
\begin{align*}
& a^\prime =  \frac {-a^2+b^2+c^2}{2bc}\,,\quad 
 b^\prime =  \frac {a^2-b^2+c^2}{2ac}\,,\quad 
 c^\prime =  \frac {a^2+b^2-c^2}{2ab} \,,\\
&a(0)=a_0\,,\quad b(0)=b_0\,,\quad c(0)=c_0\,,
\end{align*}
which can be explicitly solved. Indeed, from the above equations we deduce 
$$
\begin{aligned}
\frac{ a^\prime}{a}+\frac{ b^\prime}{b} =  \frac{c}{ab}\,,\quad 
\frac{ a^\prime}{a}+\frac{ c^\prime}{c} =  \frac{b}{ac}\,,\quad 
\frac{ b^\prime}{b}+\frac{ c^\prime}{c} =  \frac{a}{bc} \,,
\end{aligned}
$$ 
which imply 
\begin{equation}\label{abc}
\begin{aligned}
(ab)^\prime=c\,,\quad 
{(ac)}^\prime=b\,,\quad 
{(bc)}^\prime=a \,.
\end{aligned}
\end{equation} 
By substituting the last equation in the first two we get  
$$
({(bc)^\prime} b)^\prime=c\,,\quad ({(bc)}^\prime c)=b\,,
$$
and 
$$
{(bc)}^\prime b=\gamma\,,\quad {(bc)}^\prime c=\beta\,,
$$
where $\beta$ and $\gamma$ are primitives of $b$ and $c$, respectively. This in turn implies
$$
\frac bc = \frac \gamma\beta\,,
$$
i.e.,  $\beta  \beta^\prime = \gamma  \gamma^\prime$. Therefore, it follows ${(\beta^2)^\prime}={(\gamma^2)^\prime}$. Moreover, by arguing in the same way, we have 
$$
{(\alpha^2)}^\prime={(\beta^2)}^\prime={(\gamma^2)}^\prime\,,
$$
where $\alpha$ is a primitive of  $a$, and hence we get 
$$
a\alpha=b\beta=c\gamma.
$$
On the other hand, from \eqref{abc}, it follows
$$
ab-a_0b_0=\gamma\,, \quad ac-a_0c_0=\beta \,,\quad bc-b_0c_0=\alpha\,,
$$
and 
$$
abc-a_0b_0c=\gamma c\,, \quad abc-a_0c_0b=\beta b\,,\quad abc-b_0c_0a=\alpha a\,.
$$
Finally, keeping in mind that $a\alpha=b\beta=c\gamma$, we have 
$$
\frac{a}{a_0}=\frac{b}{b_0}=\frac{c}{c_0}
$$
and the ODE system simplifies to
\[
	 a^\prime =-\frac {a_0^2}{2 b_0c_0}+ \frac {b_0} {2c_0} + \frac {c_0}{2b_0} = : A_0 ,\quad 
	 b^\prime =\frac 12 \frac {a_0}{c_0}-\frac 12 \frac {b_0^2} {a_0c_0} +\frac 12 \frac {c_0}{a_0} = : B_0\,,\quad 
	 c^\prime =\frac 12 \frac {a_0}{b_0}+\frac 12 \frac {b_0} {a_0} -\frac 12 \frac {c_0^2}{a_0b_0} =: C_0\,,
\]
with solution
\[
	a(t)= A_0\cdot t + a_0\,,\quad b(t)= B_0\cdot t +b_0 \,,\quad c(t)=C_0\cdot t +c_0\,.
\]

\medskip 

\subsection{ $H_3(\mathbb C)$} This is the $2$-step nilpotent Lie group defined by
\[
	H_3(\mathbb C)=\left\{\left[\begin{smallmatrix}1&z_1&z_3\\0&1&z_2\\0&0&1\end{smallmatrix}\right]\mid z_1,z_2,z_3 \in \mathbb C\right\}\,.
\]
The group admits a left-invariant $(1,0)$-frame $\{Z_1,Z_2,Z_3\}$ such that 
\begin{equation}\label{muH3}
\mu=\zeta^1\wedge \zeta^2\otimes Z_3+\bar \zeta^1\wedge \bar \zeta^2\otimes \bar Z_3\,,
\end{equation}
where $\{\zeta^1,\zeta^2,\zeta^3\}$ is the dual frame of $\{Z_1,Z_2,Z_3\}$ and $\mu$ is the Lie bracket on $\mathfrak{h}_3(\C)$. 
\begin{prop}
Any left-invariant Hermitian metric on $H_3(\mathbb C)$ is a soliton to the {\rm HCF}. 
\end{prop}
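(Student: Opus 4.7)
The plan is to reduce to a normal form for the Hermitian metric and then exhibit explicit $\lambda$ and $D$ solving the soliton equation. Since $H_3(\C)$ is nilpotent, hence unimodular, Corollary \ref{M-flow} applies and by Proposition \ref{prop_unimod} we have $K={\rm Ric}$; in particular, formula \eqref{CX} expresses $K$ in terms of $\mu$ in any unitary frame. We will work exclusively with this simplified expression.

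Given an arbitrary left-invariant Hermitian metric $g$, the key reduction is the following. The center $\mathfrak{z}$ of $\mathfrak{h}_3(\C)$ equals $\C Z_3$ (one-dimensional), and its $g$-orthogonal complement is two-dimensional. Choose a $g$-unitary basis $\{W_1,W_2\}$ of this complement. Since $\mathfrak{h}_3(\C)$ is two-step, $[W_1,W_2]$ is a non-zero element of $\mathfrak{z}$; after rescaling the unit vector of $\mathfrak{z}$ by an appropriate unimodular complex number, we obtain $W_3 \in \mathfrak{z}$ so that $\{W_1,W_2,W_3\}$ is $g$-unitary and the only non-zero structure constants are
\[
\mu_{12}^3=-\mu_{21}^3=c>0, \qquad \mu_{\bar1\bar2}^{\bar3}=-\mu_{\bar2\bar1}^{\bar3}=c.
\]

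Plugging these into \eqref{CX} and computing term by term, a short calculation gives
\[
K(g)=-\tfrac{c^{2}}{2}\bigl(\zeta^{1}\odot\bar\zeta^{1}+\zeta^{2}\odot\bar\zeta^{2}\bigr)+\tfrac{c^{2}}{2}\,\zeta^{3}\odot\bar\zeta^{3},
\]
i.e.\ in this frame $K(g)$ is diagonal with eigenvalues $-\tfrac{c^{2}}{2},-\tfrac{c^{2}}{2},\tfrac{c^{2}}{2}$. On the other hand, the derivations of $\mathfrak{h}_3(\C)$ are exactly the $\C$-linear endomorphisms whose matrix in a basis adapted to $[W_1,W_2]=cW_3$ has bottom-right entry equal to the sum of the top-left two diagonal entries, with the last row otherwise zero. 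In particular the diagonal derivation
\[
D=\operatorname{diag}\bigl(\tfrac{c^{2}}{2},\tfrac{c^{2}}{2},c^{2}\bigr)
\]
is admissible, and a direct inspection of the three diagonal entries shows that
\[
K(g)=\lambda\,g+g(D\cdot,\cdot)+g(\cdot,D\cdot),\qquad \lambda=-\tfrac{3c^{2}}{2}.
\]
This is precisely the algebraic HCF soliton equation, so $g$ is an HCF algebraic soliton.

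The only non-routine step is the normal form reduction: it works here because $\mathfrak{h}_3(\C)$ is two-step with one-dimensional center, so once we decompose $\mathfrak{g}=\mathfrak{z}^{\perp_g}\oplus\mathfrak{z}$ orthogonally and pick a unitary basis, the bracket automatically simplifies to a single positive structure constant. I expect no obstacle beyond verifying the four diagonal summands in \eqref{CX} and checking the derivation condition; everything else is then linear algebra.
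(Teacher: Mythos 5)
Your argument is correct and follows essentially the same route as the paper: both reduce to an adapted unitary frame in which the only nonzero bracket is $\mu(W_1,W_2)=cW_3$, compute $K=\operatorname{diag}(-\tfrac{c^2}{2},-\tfrac{c^2}{2},\tfrac{c^2}{2})$ from \eqref{CX}, and exhibit the same diagonal derivation with $\lambda=-\tfrac{3c^2}{2}$. The only (harmless) slip is your claim that the derivations of $\mathfrak{h}_3(\C)$ are \emph{exactly} the maps of the stated form --- the derivation algebra is larger --- but all your proof uses is the easily verified sufficient condition $d_3=d_1+d_2$ for a diagonal endomorphism, which is precisely what the paper checks.
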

\begin{proof} 
Let $g$ be a left-invariant  Hermitian metric on $H_3(\mathbb C)$. 
We can find a unitary frame
$\{W_1,W_2,W_3\}$ of $g$ such that 
$$
W_1\in \langle Z_1,Z_2,Z_3\rangle\,,\quad  W_2\in \langle Z_2,Z_3\rangle\,,\quad W_3\in \langle Z_3\rangle \,,
$$
where $\{Z_1,Z_2,Z_3\}$ is the left-invariant $(1,0)$-frame of \eqref{muH3}. With respect to this new frame $\mu$ can be written as 
$$
\mu=a \alpha^1\wedge \alpha ^2\otimes W_3+\bar a\bar \alpha^1\wedge \bar \alpha^2\otimes \bar W_3\,,
$$
for some constant $a\in \mathbb C$, with $\{\alpha^1,\alpha^2,\alpha^3\}$ dual frame to $\{W_k\}$. Then, from \eqref{CX}, we get 
$$
K=-\frac12 |a|^2\alpha^1\otimes \bar\alpha^1-\frac12 |a|^2\alpha^2\otimes \bar\alpha^2+\frac12 |a|^2\alpha^3\otimes \bar\alpha^3\,.
$$
Moreover, let $D={\rm diag}(\lambda_1,\lambda_2,\lambda_3)$ be a diagonal automorphism of $\mathfrak h_{3}(\C)$. Then, for any $X=x_iZ_i$ and $Y=y_kZ_k$ in $\mathfrak h_{3}(\C)$, we have 
$$
D\mu(X,Y)-\mu(DX,Y)-\mu(X,DY)=(\lambda_3-\lambda_1-\lambda_2)(x_1y_2-x_2y_1)Z_3
$$
and $D$ is a derivation if and only if 
$$
\lambda_3=\lambda_1+\lambda_2\,.
$$
Therefore, if we take 
$$
\lambda=- \frac{3}{2}|a|^2\,,
$$
then $K-\lambda I$ is a derivation of $\mathfrak h_{3}(\C)$ and $g$ is a soliton to the HCF. 
\end{proof}
%
%
%
%
%
\medskip

\subsection{$S_{3,-1}$} This is a $2$-step solvable Lie group whose Lie bracket can be written in terms of a suitable $(1,0)$-frame $\{Z_1,Z_2,Z_3\}$ as  
\begin{equation}\label{S3}
\mu=\zeta^1\wedge\zeta^2\otimes Z_2-\, \zeta^1\wedge\zeta^3\otimes Z_3 +\bar \zeta^1\wedge\bar \zeta^2\otimes\bar Z_2-\bar \zeta^1\wedge\bar \zeta^3\otimes\bar Z_3\,.
\end{equation}
The group belongs to a family $S_{3,\lambda}$ having structure equations
\[
\mu=\zeta^1\wedge\zeta^2\otimes Z_2+\lambda\, \zeta^1\wedge\zeta^3\otimes Z_3 +\bar \zeta^1\wedge\bar \zeta^2\otimes\bar Z_2+\bar \lambda\,\bar \zeta^1\wedge\bar \zeta^3\otimes\bar Z_3\,,
\]
with $\lambda \in \C$ and $0<|\lambda|\leq 1$, and it is the only unimodular group in that family. 

\begin{prop}\label{prop_S_31}
A left-invariant Hermitian metric $g$ on $S_{3,-1}$ is an algebraic HCF soliton if and only if $g(Z_2, \bar Z_3)=0$. 
\end{prop}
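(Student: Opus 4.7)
The plan is as follows. First, I would check directly from the bracket relations \eqref{S3} that every complex derivation $D \in \mathrm{Der}(\g)$ of $\g = \mathfrak{s}_{3,-1}$ has the form $DZ_1 \in \g'$, $DZ_2 = d_2 Z_2$, $DZ_3 = d_3 Z_3$, where $\g' := \mathrm{span}_\C(Z_2, Z_3)$ is the abelian derived ideal and $d_2, d_3 \in \C$. In particular, $Z_2$ and $Z_3$ are always eigenvectors of $D|_{\g'}$.

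For the forward direction, suppose $g$ is an algebraic HCF soliton. By Proposition \ref{semiimplies} there exist $\lambda \in \R$ and a $g$-self-adjoint $D \in \mathrm{Der}(\g)$ with $K(g) = \lambda g + g(D\cdot, \cdot)$. Self-adjointness of $D|_{\g'}$ on the Hermitian space $(\g', g|_{\g'})$ forces $d_2, d_3 \in \R$, and the identity $g(DZ_2, \bar Z_3) = g(Z_2, \overline{DZ_3})$ becomes $(d_2 - d_3)\, g(Z_2, \bar Z_3) = 0$. If $g(Z_2, \bar Z_3) \neq 0$, then $d_2 = d_3 =: d$, so $D|_{\g'} = d \cdot \mathrm{Id}$ and the restricted soliton equation reads $K|_{\g' \times \g'} = (\lambda + d)\, g|_{\g' \times \g'}$. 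A trace computation via \eqref{CX} in a unitary frame $\{W_1, W_2, W_3\}$ with $W_2, W_3 \in \g'$ gives $\tr_{g|_{\g'}} K|_{\g' \times \g'} = 0$ (thanks to $[W_2, W_3] = 0$), hence $\lambda + d = 0$ and $K|_{\g' \times \g'} = 0$.

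To reach a contradiction in this subcase, I would exploit the freedom to replace $Z_1$ by $Z_1 + pZ_2 + qZ_3$, which preserves both the bracket relations \eqref{S3} and the quantity $g(Z_2, \bar Z_3)$, to arrange $g_{1\bar 2} = g_{1\bar 3} = 0$. An explicit application of \eqref{CX} in the Gram--Schmidt unitary frame $W_1 = Z_1/\sqrt{g_{1\bar 1}}$, $W_2 = Z_2/\sqrt{g_{2\bar 2}}$, $W_3 = \bigl(Z_3 - (\overline{g_{2\bar 3}}/g_{2\bar 2})\, Z_2\bigr)\big/\sqrt{g_{3\bar 3} - |g_{2\bar 3}|^2/g_{2\bar 2}}$ then yields
\[
	K(Z_2, \bar Z_2) \;=\; \frac{2\, g_{2\bar 2}\, |g_{2\bar 3}|^2}{g_{1\bar 1}\,\bigl(g_{2\bar 2}\, g_{3\bar 3} - |g_{2\bar 3}|^2\bigr)},
\]
which is strictly positive whenever $g(Z_2, \bar Z_3) \neq 0$, contradicting $K|_{\g' \times \g'} = 0$. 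Hence we must have $g(Z_2, \bar Z_3) = 0$.

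For the converse, assume $g(Z_2, \bar Z_3) = 0$ and perform the same normalization of $Z_1$. Then $W_i := Z_i/\sqrt{g_{i\bar i}}$ is a unitary frame with $[W_1, W_2] = \rho W_2$, $[W_1, W_3] = -\rho W_3$, $[W_2, W_3] = 0$, where $\rho := 1/\sqrt{g_{1\bar 1}}$. Formula \eqref{CX} yields $K = -\rho^2\, \omega^1 \otimes \bar \omega^1$, with $\{\omega^i\}$ the dual coframe, and the soliton equation is satisfied by $\lambda = -\rho^2$ together with the self-adjoint derivation $D$ determined by $DZ_1 = 0$ and $DZ_j = \rho^2 Z_j$ for $j = 2, 3$. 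The main obstacle is the scalar subcase in the forward direction: the spectral argument alone does not yield $g(Z_2, \bar Z_3) = 0$, and one must combine the trace identity $K|_{\g' \times \g'} = 0$ with the explicit Gram--Schmidt formula for $K(Z_2, \bar Z_2)$, for which the preliminary normalization of $Z_1$ is essential.
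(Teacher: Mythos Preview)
Your proof is correct, but it takes a somewhat more circuitous route than the paper's. The paper chooses a unitary frame adapted to the flag $\langle Z_3\rangle \subset \langle Z_2,Z_3\rangle \subset \g$ (so $W_3 \in \langle Z_3\rangle$, $W_2 \in \langle Z_2,Z_3\rangle$), in which the brackets become $\mu(W_1,W_2)=sW_2+aW_3$, $\mu(W_1,W_3)=-sW_3$ with $a=0 \iff g(Z_2,\bar Z_3)=0$. It then writes down the full matrix of $K_g$ in this frame and checks the derivation equations directly there, finding that any $g$-symmetric $D\in\mathrm{Der}(\g)$ is \emph{diagonal} in the $W$-frame; since $K_g$ has off-diagonal entry $K_{1\bar 2}=a\bar s$, the soliton equation $K_g=\lambda I+D$ forces $a=0$ immediately.

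Your approach instead analyses derivations in the original $Z$-frame (where $Z_2,Z_3$ are eigenvectors of every $D$), extracts the dichotomy $(d_2-d_3)\,g_{2\bar 3}=0$ from self-adjointness, and in the scalar subcase $d_2=d_3$ combines a trace identity on $\g'$ with an explicit diagonal computation (for $K(Z_2,\bar Z_2)$, using a \emph{different} Gram--Schmidt order). Both arguments are valid; the paper's is shorter because it reads the contradiction off a single off-diagonal entry, whereas you need the trace step plus a positivity computation to rule out the scalar subcase. Your derivation analysis in the $Z$-frame is a nice structural observation, but in the end you still perform an explicit frame computation of comparable complexity, so the gain is mainly expository. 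The converse arguments are essentially the same in both proofs.
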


\begin{proof}
Given $g$ we can find a unitary frame $\{W_1,W_2,W_3\}$ such that 
$$
W_1 \in  \left< Z_1,Z_2,Z_3\right>\,,\quad W_2 \in \left<Z_2, Z_3 \right>\,,\quad W_3 \in \left<Z_3 \right>\,.
$$
With respect to this new frame, we have
$$\begin{aligned}
\mu(W_1,W_2)=sW_2+aW_3\,,\quad \mu(W_1,W_3)=-sW_3\,,\quad \mu(W_2,W_3)=0\,,
\end{aligned}$$
for some $s,a\in \C$ with $s\neq 0$. Note that,
$$
g(Z_2,Z_{\bar 3})=0 \iff a=0\,.  
$$
Now, with respect to the frame $\{W_1,W_2,W_3\}$, the matrix of $K_g$ is given by 
\begin{equation}
\begin{aligned}\label{Kexample}
K_g=\frac 12\left( \begin{matrix}-|a|^2-2|s|^2 & 2a\bar s & 0 \\
2\bar as& -|a|^2 & 0\\
0 & 0& |a|^2
\end{matrix}\right)\,,
\end{aligned}
\end{equation}
and we are looking for a derivation $D$ such that 
\begin{equation}\label{soliton_condition}
K_g = \lambda I +D\,,\quad D^t=D\,. 
\end{equation}
Setting
$$
DW_k= D_{ik} W_i\,,
$$
from the structure equations, we have 
$$
D\mu(W_2,W_3)-\mu(DW_2,W_3)-\mu(W_2,DW_3)=sD_{12}W_3+D_{13}(sW_2+aW_3)=0\,,
$$
and hence
$$
D_{12}=D_{13}=0\,,
$$ 
since $s\neq 0$. Similarly,
$$
D\mu(W_1,W_3)-\mu(DW_1,W_3)-\mu(W_1,DW_3)=-sD_{13}W_1-2sD_{23}W_2+(sD_{11}-aD_{23})W_3=0\,,
$$
which implies 
$$
D_{11}=D_{23}=0\,,
$$
and hence $D$ is diagonal. Finally, from \eqref{Kexample} we have  $a=0$ and $g(Z_2,\bar Z_3)=0$.\smallskip
 
Let us now assume $g(Z_2,\bar Z_3)=0$. Then, $a=0$ and \eqref{Kexample} reduces to 
$$
\begin{aligned}
K_g=\left( \begin{matrix}-|s|^2 & 0 & 0 \\
0& 0 & 0\\
0 & 0& 0
\end{matrix}\right)=-|s|^2\,I+D\,,
\end{aligned}
$$  
where  $D={\rm diag}(0,|s|^2,|s|^2)$. Finally, since 
$$
D\mu(W_1,W_2)-\mu(DW_1,W_2)-\mu(W_1,DW_2)=0\,,
$$
$D$ is a derivation and $g$ is an algebraic soliton.  
\end{proof}

\subsection{$S_{3,\lambda}$ with $\lambda>0$} In this section we study the HCF on a family of non-unimodular complex solvable Lie groups of dimension $3$.

\begin{prop}\label{non-unimod} Let $\lambda$ be a positive real number. Then, any diagonal left-invariant metric $g$ on $S_{3,\lambda}$ is a shrinking algebraic soliton.
\end{prop}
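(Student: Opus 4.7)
The plan is to compute $K(g)$ explicitly in a unitary frame adapted to the diagonal metric and to exhibit a symmetric derivation $D \in \mathrm{Der}(\mathfrak g)$ such that $K_g - c\,\mathrm{Id}_\g = D$ for some constant $c > 0$.

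First, write $g = a_1\zeta^1\odot\bar\zeta^1 + a_2\zeta^2\odot\bar\zeta^2 + a_3\zeta^3\odot\bar\zeta^3$ with $a_i > 0$. The rescaled frame $W_i := Z_i/\sqrt{a_i}$ is $g$-unitary, and the structure equations \eqref{S3} for $S_{3,\lambda}$ become
\[
\mu(W_1,W_2) = s\, W_2, \qquad \mu(W_1,W_3) = \lambda s\,W_3, \qquad \mu(W_2,W_3) = 0,
\]
where $s := 1/\sqrt{a_1}$. In particular the non-unimodular trace $\mu_{rk}^k = (1+\lambda)s\,\delta_{r1}$ does not vanish, so the $Q^3$ and $Q^4$ contributions in \eqref{CX2} must be retained.

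Next I would compute the matrix of $K_g$ in this basis componentwise via formula \eqref{CX2}. Off-diagonal entries vanish because each of the five terms in \eqref{CX2} respects the decomposition $\mathfrak g = \langle W_1\rangle \oplus \langle W_2,W_3\rangle$ into $\mathrm{ad}_{W_1}$-eigenspaces, so any surviving index configuration would mix those subspaces, which it cannot. Collecting the diagonal contributions, the $-\tfrac12 Q^1 + \tfrac14 Q^2$ part cancels on the $2\bar2$ and $3\bar3$ slots and contributes $\lambda|s|^2$ on the $1\bar1$ slot, while $\tfrac12Q^3 - Q^4$ supplies the non-unimodular corrections; the result is
\[
K_g \;=\; |s|^2\,\mathrm{diag}\!\bigl(\lambda,\;-(1+\lambda),\;-\lambda(1+\lambda)\bigr)
\]
with respect to $\{W_1,W_2,W_3\}$.

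Finally, set $c := \lambda |s|^2$ and $D := K_g - c\,\mathrm{Id}_\g = |s|^2\,\mathrm{diag}\bigl(0,\,-(1+2\lambda),\,-\lambda(2+\lambda)\bigr)$. The derivation identity $D\mu(X,Y) = \mu(DX,Y) + \mu(X,DY)$ holds on each ordered pair of basis vectors: for $(W_1,W_k)$ with $k=2,3$ both sides equal $d_k\,\mu(W_1,W_k)$, where $d_k$ is the $k$-th diagonal entry of $D$, since $DW_1=0$; and for $(W_2,W_3)$ both sides vanish. Hence $D$ is a symmetric derivation of $\mathfrak g$, and $K_g = c\,\mathrm{Id}_\g + D$ with $c = \lambda|s|^2 > 0$, so $g$ is an algebraic HCF soliton of shrinking type. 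The only delicate point is the bookkeeping needed to organize the many terms of \eqref{CX2} in the non-unimodular setting; no conceptual difficulty arises.
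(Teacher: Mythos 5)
Your proof is correct and takes essentially the same route as the paper: compute $K$ of a general diagonal metric via \eqref{CX2} in the adapted unitary frame, observe that it is diagonal with entries $|s|^2\bigl(\lambda,\,-(1+\lambda),\,-\lambda(1+\lambda)\bigr)$, and note that subtracting $\lambda|s|^2\,\mathrm{Id}$ leaves a diagonal endomorphism killing $W_1$, which is automatically a symmetric derivation of $\mathfrak{s}_{3,\lambda}$. One cosmetic slip in your bookkeeping: on the $1\bar1$ slot the value $\lambda|s|^2$ arises as $-\tfrac12(1+\lambda^2)|s|^2$ from $-\tfrac12Q^1+\tfrac14Q^2$ plus $\tfrac12(1+\lambda)^2|s|^2$ from $\tfrac12Q^3$ (with $Q^4_{1\bar1}=0$), not from the first group alone, but the resulting matrix is exactly the paper's (up to the overall factor $|s|^2=1/a_0$ coming from your use of the unitary frame).
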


\begin{proof}
Let us consider a left-invariant diagonal metric on $S_{3,\lambda}$ given by
$$ 
g:= a_0\,\zeta^1\odot\bar\zeta^1+b_0\,\zeta^2\odot\bar\zeta^2+ c_0\,\zeta^3\odot\bar\zeta^3\,,\quad a_0,b_0, c_0>0\,.
$$
By means of \eqref{CX2}, a direct computation yields that
$$
K_g=\left(\begin{matrix}
{\lambda} & 0 & 0\\
0 & -\frac{b_0}{ a_0}(1+\lambda) & 0\\
0 & 0 & -\frac{c_0}{a_0}(\lambda+\lambda^2)
\end{matrix}\right)\,.
$$
On the other hand, arguing in the same way as Proposition \ref{prop_S_31}, one can prove that
$$
D:=K_g-c\,I
$$
is a derivation of the Lie algebra $\mathfrak s_{3,\lambda}$ if and only if $D_{11}=0$. Thus, setting
$$
c:={\lambda}
$$
the claim follows.
\end{proof} 
This in particular shows that HCF may develop finite-time singularities on complex Lie groups which are not unimodular. 
 
\section{Static left-invariant metrics on nilpotent Lie groups}
In this section we focus on nilpotent Lie groups $N$ equipped with a left-invariant Hermitian structure.  In \cite{EFV} it is proved that left-invariant pluriclosed metrics on non-abelian $2$-step nilpotent Lie groups  are never static with respect to the PCF. In the following, we generalize this result to a class of flows including the HCF.  

\medskip 
Given $x=(x_1,x_2,x_3,x_4)\in \R^4$ and a Hermitian metric $g$, let us set 
$$
K^{x}(g)=S-x_1Q^1-x_2Q^2-x_3Q^3-x_4 Q^4\,.
$$
Then, we can consider the geometric flow 
$$
\partial_tg_t=-K^{x}(g_t)\,,\qquad g_{t|t=0}=g_0\,.
$$
In this way,
\begin{itemize}
\item  for $x =( 1/2,- 1/4,- 1/2 , 1)$ the flow corresponds to the HCF;

\vspace{0.2cm}
\item for $x =( 1,0,0 ,0)$ the flow corresponds to the PCF \eqref{PCF};

\vspace{0.2cm}
\item for $x =( 0,-1/2,0 ,0)$ the flow corresponds to the modified HCF \eqref{yuriflow}. 
\end{itemize}
\begin{lemma}\label{theo_static}
Fix $x\in \R^4$ such that 
\[
	x_1\leq 1\,,\quad x_2\,,x_3\leq 0\,,\quad x_1+x_2 > 0 \,,\quad x_3+x_4\geq 0\,.
\] 
Let $G$ be a Lie group with a left-invariant complex structure $J$. Assume $\mathfrak{z}\cap J(\mathfrak{z})\neq 0$, where $\mathfrak{z}$ is the center of the Lie algebra of $G$. Then every left-invariant Hermitian metric $g$ on $G$ such that 
\begin{equation}\label{Kx}
K^{x}(g)=\lambda\,g\,,\qquad s\leq 0,
\end{equation}
is K\"ahler Ricci-flat.      
\end{lemma}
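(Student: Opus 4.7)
The plan is to exploit the hypothesis $\mathfrak{z}\cap J(\mathfrak{z})\neq 0$ by choosing a $g$-unitary frame $\{Z_1,\dots,Z_n\}$ of $\g^{1,0}$ such that $Z_1$ lies in the $(1,0)$-part of $(\mathfrak{z}\cap J\mathfrak{z})^{\C}$. Centrality of $Z_1$ and $\bar Z_1$ forces every structure constant of $\mu$ with $1$ or $\bar 1$ as a lower index to vanish, which dramatically simplifies the expressions for $S_{1\bar 1}$ and $Q^i_{1\bar 1}$ from Section \ref{firstsection}. A direct term-by-term inspection then gives $Q^1_{1\bar 1}=S_{1\bar 1}=\sum_{k,r}|\mu_{k\bar r}^{\bar 1}|^2$, $Q^2_{1\bar 1}=\sum_{k,r}|\mu_{kr}^1|^2$, $Q^3_{1\bar 1}=\bigl|\sum_k\mu_{k\bar k}^1\bigr|^2$, and $Q^4_{1\bar 1}=0$ (each of the eighteen monomials appearing in the formula for $Q^4$ contains at least one such vanishing factor). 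All four quantities are non-negative, so the static equation at $(Z_1,\bar Z_1)$ reads
\[
\lambda\;=\;K^x_{1\bar 1}\;=\;(1-x_1)\,S_{1\bar 1}-x_2\,Q^2_{1\bar 1}-x_3\,Q^3_{1\bar 1}\;\geq\;0
\]
by the hypotheses $x_1\leq 1$ and $x_2,x_3\leq 0$.

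To obtain the reverse inequality I would compute the global traces $q^i:=\tr_g Q^i$ and use the assumption $s\leq 0$. Since the Chern torsion $T$ is of pure type $(2,0)+(0,2)$ with $T_{\bar k\bar l}^{\bar m}=\overline{T_{kl}^m}$, a short calculation in any unitary frame yields the identities
\[
q^1\,=\,q^2\,=\,|T|^2,\qquad q^3\,=\,q^4\,=\,|\tau|^2,
\]
where $\tau_i:=T_{ik}^k$ is the trace of the torsion. Taking the $g$-trace of $K^x(g)=\lambda g$ therefore gives
\[
n\lambda\;=\;s-\sum_i x_i\,q^i\;=\;s-(x_1+x_2)|T|^2-(x_3+x_4)|\tau|^2\;\leq\;s\;\leq\;0,
\]
because $x_1+x_2>0$ and $x_3+x_4\geq 0$. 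Combined with the previous step this forces $\lambda=0$, and back-substitution produces $s=(x_1+x_2)|T|^2+(x_3+x_4)|\tau|^2\geq 0$; together with $s\leq 0$, both non-negative summands must vanish, and the strict inequality $x_1+x_2>0$ then forces $|T|^2=0$, so $T\equiv 0$ and $g$ is K\"ahler.

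For a K\"ahler metric the Chern connection coincides with the Levi-Civita one, every $Q^i$ vanishes, and $S={\rm Ric}^{1,1}={\rm Ric}$, so $K^x(g)=\lambda g=0$ reduces to ${\rm Ric}=0$, proving that $g$ is K\"ahler Ricci-flat. The main obstacle in executing this plan is the algebraic bookkeeping behind the first two steps: verifying term by term that $Q^4_{1\bar 1}=0$ under centrality, and establishing the (somewhat surprising) identities $q^1=q^2=|T|^2$ and $q^3=q^4=|\tau|^2$. These identities are exactly what make the sign hypotheses on $x$ sufficient to yield the two opposite non-negativity bounds on $\lambda$ that, together with $s\leq 0$, collapse both $\lambda$ and $T$ to zero.
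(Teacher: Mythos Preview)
Your proposal is correct and follows essentially the same route as the paper's proof: choose a unitary frame with $Z_1$ central, compute $S_{1\bar 1}=Q^1_{1\bar 1}$, $Q^2_{1\bar 1},Q^3_{1\bar 1}\geq 0$, $Q^4_{1\bar 1}=0$ to get $\lambda=K^x_{1\bar 1}\geq 0$, then use the trace identities $q^1=q^2=|T|^2$, $q^3=q^4=|\tau|^2$ together with $s\leq 0$ to force $\lambda=0$ and $T=0$. The only difference is cosmetic: the paper compresses the two inequalities into the single chain $0\leq \tr_g K^x = s-(x_1+x_2)q^1-(x_3+x_4)q^3\leq 0$ rather than isolating $\lambda$ first and back-substituting.
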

\begin{proof}
Let $g$ be a Hermitian metric and set $q^i={\rm tr}_{g}\,Q^i$. With respect to a unitary frame, we have
\begin{align*}
	&Q^1_{i\bar i}=T_{ik\bar m}T_{\bar i\bar k m}\,,\quad \quad 
	Q^2_{i\bar i}=T_{\bar k\bar m i}T_{km \bar i}\,,\\
	&Q^3_{i\bar i}=T_{ik\bar k}T_{\bar i\bar m m}\,,\quad \quad 
	Q^4_{i\bar j}=\frac12(T_{mk\bar k}T_{\bar m\bar ii} +T_{\bar m\bar kk}T_{mi\bar i})\,, \\
	&q^1=q^2=\|T\|^2\,,\qquad q^3=q^4=\|w\|^2\,,
\end{align*}
where $w_i = g^{j\bar k} T_{ij\bar k}$. Let now $g$ be left-invariant. By the assumption $\mathfrak{z}\cap J(\mathfrak{z})\neq 0$, there exists a left-invariant unitary frame $\{Z_i\}$ on $(G,g)$  such that $Z_1\in \mathfrak{z} \otimes \mathbb C$. Then, by the formulas in Section \ref{firstsection}, we have  
\[
	S_{1\bar 1}=\mu_{k\bar r}^{\bar 1} \mu_{\bar k r}^1,
\]
and 
\begin{align*}
	&Q^1_{1\bar 1}=\mu_{k\bar r}^{\bar 1}\mu_{\bar kr}^{1}\,,\quad \quad Q^2_{1\bar 1}=\mu_{\bar k\bar r}^{\bar 1}\mu_{kr}^1\,,\\
	&Q^3_{1\bar 1}=\mu_{k\bar k}^{\bar 1}\mu_{\bar r r}^1\,,\quad \quad Q^4_{1\bar 1}=0\,.
\end{align*}
Therefore, 
$$
	K^x_{1\bar 1}=\mu_{k\bar r}^{\bar 1} \mu_{\bar k r}^1-x_1\mu_{k\bar r}^{\bar 1}\mu_{\bar kr}^{1}-x_2\mu_{\bar k\bar r}^{\bar 1}\mu_{kr}^1-x_3\mu_{k\bar k}^{\bar 1}\mu_{\bar r r}^1
$$
and by the assumption on $x$ it follows $K^x(Z_1,\bar Z_1)\geq 0$. Assume now that $g$ further satisfies \eqref{Kx}.  
Since $n\lambda={\rm tr}_gK^x=nK^{x}(Z_1,Z_{\bar 1})$, we have that ${\rm tr}_gK^x\geq 0$. Thus,  
\[
	0 \leq {\rm tr}_gK^x=s-x_iq^i=s-(x_1+x_2)q^1-(x_3+x_4)q^3 \leq 0,
\]
since $s\leq 0$. Hence, we must have an equality and $q^i=0$. This implies $T=0$ and $Q^i=0$ for all $i$, from which $g$ is K\"ahler. Also, $\lambda = 0$, thus $S(g) = K^x(g) + x_i Q^i(g) = 0$ and $g$ is Ricci flat. 
\end{proof}

\begin{rem}
{\em The assumptions in Theorem \ref{theo_static} imply in particular that $\mathfrak{z} \neq 0$. Notice that condition $\mathfrak{z}\cap J	\mathfrak{z}\neq 0$ cannot be dropped in general dropped, as the examples of HCF static left-invariant metrics on ${\rm SL}(2,\C)$ show.
} 
\end{rem}

Theorem \ref{theo_static} implies the non-existence of a static Hermitian metric on non-abelian nilpotent Lie groups satisfying $\mathfrak{z} \cap J(\mathfrak{z})\neq 0$. 

\begin{prop}\label{lastcor}
Fix $x\in \R^4$ such that 
$$
x_1\leq 1\,,\quad x_2\,,x_3\leq 0\,,\quad x_1+x_2 > 0\,,\quad x_3+x_4\geq 0\,.
$$ 
Let $N$ be a non-abelian nilpotent Lie group with a left-invariant Hermitian structure $(g,J)$.  Assume that $\mathfrak{z}\cap J\mathfrak{z}\neq 0$, 
then there aren't left-invariant Hermitian metrics on $(G,J)$ satisfying the static equation 
$$
K^{x}(g)=\lambda\,g\,.
$$

\end{prop}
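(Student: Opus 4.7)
The plan is to apply Lemma \ref{theo_static} to reduce the static condition to the K\"ahler Ricci-flat case, and then to derive a contradiction with non-abelianity through the Milnor-type scalar curvature identity implicit in Proposition \ref{prop_mm}.

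First I would argue by contradiction, assuming that a left-invariant Hermitian metric $g$ on $N$ satisfies $K^{x}(g)=\lambda\, g$. To apply Lemma \ref{theo_static} one must verify the hypothesis $s\le 0$; in fact I would establish the stronger fact that \emph{on any nilpotent Lie group equipped with a left-invariant integrable complex structure and a left-invariant Hermitian metric, the Chern scalar curvature $s$ vanishes identically}. Since $J$ is integrable, $\g^{1,0}$ is a complex subalgebra of $\g\otimes\C$; as $\g\otimes\C$ is nilpotent (because $\g$ is), so is its subalgebra $\g^{1,0}$. Consequently $\ad_{Z_r}|_{\g^{1,0}}$ is nilpotent for each $r$, and its trace
\[
\mu_{rs}^{s}=\tr\ad_{Z_r}|_{\g^{1,0}}
\]
vanishes. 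Nilpotency also implies unimodularity, yielding $\mu_{rs}^{s}+\mu_{r\bar s}^{\bar s}=0$; combined with the previous vanishing this gives $\mu_{r\bar s}^{\bar s}=0$, and by complex conjugation $\mu_{\bar r i}^{i}=0$ as well. Substituting these into the expression
\[
s=\mu_{k\bar k}^{r}\mu_{r\bar i}^{\bar i}-\mu_{k\bar k}^{\bar r}\mu_{\bar r i}^{i}
\]
from Section \ref{firstsection} then gives $s=0$.

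With $s\le 0$ in hand, Lemma \ref{theo_static} implies that $g$ is K\"ahler and $S(g)=0$; in the K\"ahler case $S$ coincides with the Riemannian Ricci tensor, so $g$ is Ricci-flat. Because $N$ is nilpotent its Killing form vanishes and, by unimodularity, its mean curvature vector is zero, so the decomposition ${\rm Ric}={\rm M}-\tfrac12{\rm B}-{\rm S}(\ad_H)$ recalled before Lemma \ref{M11} reduces to ${\rm Ric}(g)={\rm M}(g)$. Taking the $g$-trace and applying Proposition \ref{prop_mm} with $E={\rm Id}_\g$, together with the identity $\pi({\rm Id}_\g)\mu=-\mu$, yields
\[
\tr_g{\rm Ric}(g)=\tr_g {\rm M}(g)=\tfrac14\,\la\pi({\rm Id}_\g)\mu,\mu\ra=-\tfrac14\|\mu\|^{2}.
\]
Ricci-flatness therefore forces $\|\mu\|=0$, contradicting the hypothesis that $N$ is non-abelian and completing the proof.

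The main delicate point is the vanishing of the Chern scalar curvature on any Hermitian nilpotent Lie group; this relies on integrability of $J$ (so that $\g^{1,0}$ is a subalgebra) combined with the fact that subalgebras of nilpotent Lie algebras are nilpotent. Granting this observation, the remainder of the argument is a direct concatenation of Lemma \ref{theo_static} with the moment map identity of Proposition \ref{prop_mm}; note that the hypothesis $\mathfrak{z}\cap J\mathfrak{z}\neq 0$ enters precisely through Lemma \ref{theo_static}, while the hypothesis $x_1+x_2>0$ is what rules out $T\neq 0$ in the equality case of that lemma.
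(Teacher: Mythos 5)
Your proof is correct, and it deviates from the paper's argument in two places, both in the direction of being more self-contained. First, where the paper simply cites Lauret--Valencia for the vanishing of the Chern scalar curvature on nilpotent Hermitian Lie groups, you prove it directly: integrability makes $\g^{1,0}$ an $\ad_{Z_r}$-invariant subspace of the nilpotent algebra $\g\otimes\C$, so $\ad_{Z_r}|_{\g^{1,0}}$ is nilpotent and traceless, giving $\mu_{rs}^{s}=0$; combined with $\tr\ad_{Z_r}=0$ on all of $\g\otimes\C$ this kills the mixed traces $\mu_{r\bar s}^{\bar s}$ and $\mu_{\bar r s}^{s}$, hence both terms of $s$. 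This is a clean replacement for the citation. Second, where the paper invokes the Benson--Gordon theorem to rule out a left-invariant K\"ahler metric on a non-abelian nilpotent group, you instead exploit the full strength of Lemma \ref{theo_static} (which delivers Ricci-flatness, not just the K\"ahler condition) together with the moment-map identity $\tr_g{\rm M}(g)=-\tfrac14\Vert\mu\Vert^2$ and the reduction ${\rm Ric}={\rm M}$ valid when ${\rm B}=0$ and $H=0$; Ricci-flatness then forces $\mu=0$. This trace identity is exactly the one the paper itself uses in the proof of Theorem \ref{thm_mmsoliton}, so your route stays entirely within the paper's toolkit and avoids an external theorem, at the modest cost of using the Ricci-flat conclusion rather than only the K\"ahler one. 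Both arguments are valid; yours is arguably preferable for a reader who wants the proposition to be self-contained.
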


\begin{proof}
Since $N$ is nilpotent, the Chern scalar curvature of every left-invariant Hermitian metric on $N$  vanishes (see e.g. \cite[Proposition 2.1]{lauretval}). Hence Lemma \ref{theo_static} and the theorem of Benson and Gordon \cite{BG} imply the statement. 
\end{proof}

Note that Proposition \ref{lastcor} implies the  already known result about the non-existence of left-invariant pluriclosed metrics on nilpotent Lie groups static with respect to the PCF \cite{EFV}, since the pluriclosed condition forces 
$J\mathfrak{z}=\mathfrak{z}$ (see \cite[Proposition 3.1]{EFV}).   Moreover, the proposition can be applied to the HCF and we have 

\begin{cor}
Let $N$ be a non-abelian nilpotent Lie group with a left-invariant Hermitian structure $(g,J)$, and assume that 
$\mathfrak{z}\cap J\mathfrak{z}\neq 0$.
Then, there are no left-invariant Hermitian metrics on $(N,J)$ which are static with respect to HCF.
\end{cor}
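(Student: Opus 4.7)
The plan is to derive this corollary directly from Proposition \ref{lastcor} by verifying that the parameter vector corresponding to HCF lies in the admissible region. Recall from the discussion preceding Lemma \ref{theo_static} that HCF is exactly the flow $K^x$ with $x = (x_1, x_2, x_3, x_4) = (1/2, -1/4, -1/2, 1)$. Thus the static equation for HCF is the instance $K^x(g) = \lambda g$ of the general static equation considered in Proposition \ref{lastcor}.

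First I would check the four numerical hypotheses on $x$ required by Proposition \ref{lastcor}. For HCF:
\begin{itemize}
\item $x_1 = 1/2 \leq 1$;
\item $x_2 = -1/4 \leq 0$ and $x_3 = -1/2 \leq 0$;
\item $x_1 + x_2 = 1/2 - 1/4 = 1/4 > 0$;
\item $x_3 + x_4 = -1/2 + 1 = 1/2 \geq 0$.
\end{itemize}
All four conditions are satisfied, so the HCF parameter lies in the admissible range for the non-existence result.

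Consequently, assuming there existed a left-invariant Hermitian metric $g$ on $(N,J)$ with $K(g) = \lambda g$, Proposition \ref{lastcor} (applied to the above choice of $x$) would immediately yield a contradiction, since $N$ is non-abelian nilpotent with $\mathfrak{z} \cap J \mathfrak{z} \neq 0$. This completes the proof. The main obstacle was already overcome in establishing Proposition \ref{lastcor} itself (the delicate combination of Lemma \ref{theo_static}, the vanishing of the Chern scalar curvature on nilpotent Lie groups, and the Benson--Gordon theorem); for the corollary there is no additional work beyond the verification of the inequalities above.
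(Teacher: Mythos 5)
Your proof is correct and follows exactly the paper's route: the corollary is obtained by specializing Proposition \ref{lastcor} to the HCF parameter $x=(1/2,-1/4,-1/2,1)$ and checking the four inequalities, which the paper does implicitly and you do explicitly. Nothing further is needed.
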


\begin{rem}\rm The assumption $\mathfrak{z}\cap J\mathfrak{z}\neq 0$ is not always satisfied on a nilpotent Lie group. Indeed, the nilpotent Lie algebras with structure equations given by
$$
de^1=de^2=0\,,\quad de^3=e^{12}\,,\quad de^4=e^{13}\,,\quad de^5=e^{23}\,,\quad de^6=e^{14}+e^{25}
$$ 
or 
$$
de^1=de^2=de^3=0\,,\quad de^4=e^{13}\,,\quad de^5=e^{23}\,,\quad de^6=e^{14}+e^{25}
$$
and complex structures $Je^1=e^2$, $Je^3=e^6$, $Je^4=e^5$ do not satisfy the above hypothesis (see e.g. \cite{salamon}).
\end{rem}

\section{Evolution of Hermitian metrics compatible with abelian complex structures}
A left-invariant complex structure $J$ on a Lie group $G$ with Lie algebra $\g$ is said to be {\em abelian} if $\g^{1,0}$ is an abelian Lie algebra. In particular, for any left-invariant abelian complex structure $J$ on $G$, the center of $\g$ is $J$-invariant (\cite[Lemma 2.1]{ABD}).\smallskip

%
%
%
%
%

The following proposition is about the existence of static metrics compatible with abelian complex structures: 
\begin{prop}
Let $G$ be a unimodular Lie group with an abelian complex structure. Assume that the center of $G$ is not trivial.  Then $(G,J)$ does not admit any static metric unless it is abelian.
\end{prop}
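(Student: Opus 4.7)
The plan is to verify the hypotheses of Lemma~\ref{theo_static} for the HCF tuple $x=(\tfrac12,-\tfrac14,-\tfrac12,1)$ and then deduce abelianness from the resulting K\"ahler Ricci-flat condition. First, note that this $x$ satisfies $x_1\leq 1$, $x_2,x_3\leq 0$, $x_1+x_2=\tfrac14>0$ and $x_3+x_4=\tfrac12\geq 0$. The remaining hypotheses $s\leq 0$ and $\mathfrak{z}\cap J\mathfrak{z}\neq 0$ should follow from the structure assumptions: for the second, I would use that when $J$ is abelian the center $\mathfrak{z}$ is $J$-invariant by \cite[Lem.~2.1]{ABD}, so the non-triviality of $\mathfrak{z}$ forces $\mathfrak{z}\cap J\mathfrak{z}=\mathfrak{z}\neq 0$. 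For the first, I would fix a $g$-unitary $(1,0)$-frame $\{Z_i\}$: the abelianness of $J$ gives $\mu_{ij}^{k}=0$, which combined with the unimodular identity $\mu_{ir}^{r}+\mu_{i\bar r}^{\bar r}=0$ yields $\mu_{i\bar r}^{\bar r}=0$. Plugging this into the formula $s=\mu_{k\bar k}^{r}\mu_{r\bar i}^{\bar i}-\mu_{k\bar k}^{\bar r}\mu_{\bar r i}^{i}$ from Section~\ref{firstsection} produces $s=0$.

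By Lemma~\ref{theo_static} any static metric $g$ on $(G,J)$ is then K\"ahler and Ricci-flat, and by the Alekseevskii--Kimelfeld theorem a homogeneous Ricci-flat Riemannian manifold is flat; hence $g$ is flat. It remains to conclude that $\g$ is abelian from the combination of flatness, K\"ahler, abelian $J$, and unimodularity. I would approach this via Milnor's structure theorem for left-invariant flat Riemannian metrics, which writes $\g=\mathfrak{u}\oplus\mathfrak{b}$ with $\mathfrak{u}$ an abelian ideal, $\mathfrak{b}$ an abelian subalgebra, and $\operatorname{ad}_X|_{\mathfrak{u}}$ skew-symmetric for every $X\in\mathfrak{b}$. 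A direct Koszul-formula computation shows that the only non-vanishing Levi-Civita terms are $\nabla_{X}Y=[X,Y]$ for $X\in\mathfrak{b}$ and $Y\in\mathfrak{u}$. Imposing the parallelism $\nabla J=0$ and using the abelian-$J$ identities $[U,V]=[JU,JV]$ and $[U,JV]=-[JU,V]$ should rule out any non-trivial skew-symmetric $\mathfrak{b}$-action on $\mathfrak{u}$, thereby forcing $\g$ to be abelian.

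The main obstacle I foresee is precisely this last implication, where one must make the interplay between $\nabla J=0$, the Milnor decomposition, and the abelian-$J$ relations fully explicit in order to exclude a non-trivial $\mathfrak{b}$-action on $\mathfrak{u}$. As an alternative, one may appeal to known classification-type results for K\"ahler structures on unimodular Lie groups endowed with an abelian complex structure.
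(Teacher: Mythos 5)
Your overall strategy coincides with the paper's: verify the hypotheses of Lemma \ref{theo_static} for the HCF tuple $x=(\tfrac12,-\tfrac14,-\tfrac12,1)$ and then rule out the K\"ahler conclusion on a non-abelian group. The first half of your argument is correct and, if anything, slightly cleaner than the paper's. The $J$-invariance of $\mathfrak{z}$ from \cite[Lemma 2.1]{ABD} gives $\mathfrak{z}\cap J\mathfrak{z}=\mathfrak{z}\neq 0$ exactly as in the paper, and your direct computation of $s=0$ (abelianness gives $\mu_{ir}^r=0$, unimodularity then gives $\mu_{i\bar r}^{\bar r}=0$, and both terms of $s=\mu_{k\bar k}^{r}\mu_{r\bar i}^{\bar i}-\mu_{k\bar k}^{\bar r}\mu_{\bar r i}^{i}$ vanish) applies to an \emph{arbitrary} left-invariant Hermitian metric, whereas the paper quotes the vanishing of $s$ for abelian \emph{balanced} structures, a hypothesis that is not actually available here; your version removes that wrinkle.

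The gap is in your concluding step. The paper finishes in one line by citing \cite[Theorem 4.1]{ABD}: a unimodular non-abelian Lie group with an abelian complex structure admits no left-invariant K\"ahler metric. This is precisely the ``classification-type result'' you relegate to an alternative, and with it the detour through Ricci-flatness, the Alekseevskii--Kimelfeld theorem, and Milnor's structure theorem is unnecessary --- K\"ahlerness alone already yields the contradiction. Your primary route is not wrong in the sense of heading toward a false statement, but as written it is incomplete: the claim that $\nabla J=0$ together with $[JX,JY]=[X,Y]$ ``should rule out'' a non-trivial skew-symmetric action of $\mathfrak{b}$ on $\mathfrak{u}$ is exactly the hard part, and it is delicate because neither $\mathfrak{u}$ nor $\mathfrak{b}$ need be $J$-invariant, so one cannot simply intertwine $J$ with $\ad_X|_{\mathfrak{u}}$. (That some extra hypothesis beyond flat$+$K\"ahler$+$unimodular is genuinely needed is shown by $\R\ltimes\R^3$ with $\ad_{e_1}$ a rotation of a plane in $\R^3$ and $Je_1=e_2$, $Je_3=e_4$: this is flat, K\"ahler, unimodular and non-abelian, but $J$ is not abelian.) To make your route airtight you would have to carry out that projection bookkeeping in full; the efficient fix is simply to invoke \cite[Theorem 4.1]{ABD} directly after Lemma \ref{theo_static}.
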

\begin{proof}
We recall that the Chern scalar curvature of a left-invariant abelian balanced Hermitian structure is always vanishing (see \cite{{Vezzoni}}). Since the center of $\g$ is $J$-invariant and  non-trivial, the assumptions of Lemma \ref{theo_static} are satisfied and every static metric on $(G,J)$ is K\"ahler.  Finally we recall that a unimodular non-abelian Lie group with an abelian complex structure  does not admit any left-invariant K\"ahler metric \cite[Theorem 4.1]{ABD} and the claim follows. 
\end{proof}

In the following theorem we consider left-invariant balanced metrics compatible with abelian complex structures. We recall that in general a Hermitian metric  is {\em balanced} if its fundamental form is coclosed.  Balanced Hermitian metrics on Lie algebras with abelian complex structures are studied in \cite{av}. 

\begin{theorem}\label{abelian}
Let $G$ be a unimodular Lie group with an abelian complex structure $J$. A  left-invariant Hermitian metric $g$ on $(G,J)$ is balanced if and only if $k$ and the Riemannian scalar curvature coincide and  in the balanced case  we have $K={\rm Ric}^{1,1}$. 
Furthermore, the parabolic flow $\tfrac{d}{dt}\,g_t=-{\rm Ric}^{1,1}(g_t)$ specified by the $(1,1)$-component of the Ricci tensor has always a long-time solution for every left-invariant initial Hermitian metric.   
\end{theorem}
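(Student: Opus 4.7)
The proof has three parts matching the three claims of the theorem. For the first two, I would work in a left-invariant unitary $(1,0)$-frame $\{Z_i\}$ adapted to $J$. The abelian hypothesis translates to $\mu_{ij}^k=\mu_{ij}^{\bar k}=0$, so that only the mixed components $\mu(Z_i,\bar Z_j)$ survive; combining with the unimodularity identity $\mu_{ir}^{r}+\mu_{i\bar r}^{\bar r}=0$ and the abelian relation $\mu_{ir}^{r}=0$ yields the further simplification $\mu_{i\bar r}^{\bar r}=0$ for every $i$. Plugging these into the formulas of Section \ref{firstsection}, one sees that the Chern scalar curvature satisfies $s=0$ identically, while the $Q^{i}$ traces reduce to $q^1=q^2=\|T\|^2$ and $q^3=q^4=\|w\|^2$, with $w_i=g^{j\bar k}T_{ij\bar k}$ the Lee vector of $g$. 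Hence $k=-\tfrac{1}{4}\|T\|^2-\tfrac{1}{2}\|w\|^2$.

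For the Riemannian scalar $r$, I would use ${\rm Ric}=\mathrm M-\tfrac{1}{2}\mathrm B$ (valid on unimodular groups) together with the identity $\tr_g \mathrm M=-\tfrac{1}{4}\|\mu\|^2$ which follows immediately from Proposition \ref{prop_mm} by taking $E={\rm Id}$. Expanding $\tr_g\mathrm B$ in the $(1,0)$-frame and simplifying via the abelian identity $\ad_{JX}=-\ad_{X}\circ J$, the difference $k-r$ reduces to a positive multiple of $\|w\|^2$, which proves that $g$ is balanced if and only if $k=r$. Substituting the balanced assumption $w=0$ into the simplified formulas, $Q^3$ and $Q^4$ vanish and a direct term-by-term comparison of $K=S-\tfrac{1}{2}Q^1+\tfrac{1}{4}Q^2$ with the $(1,1)$-part of $\mathrm M-\tfrac{1}{2}\mathrm B$ yields $K={\rm Ric}^{1,1}$.

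For the long-time existence of $\partial_tg_t=-{\rm Ric}^{1,1}(g_t)$, I would adopt the bracket-flow approach of Section \ref{sec_bf}. The flow corresponds to
\[
\tfrac{d}{dt}\mu_t=-\pi\bigl({\rm Ric}^{1,1}_{\mu_t}\bigr)\mu_t\,,
\]
and since ${\rm Ric}^{1,1}_{\mu_t}$ commutes with $J$, the abelian property of $J$ is preserved along the flow. Using the moment-map property of $\mathrm M$ (Proposition \ref{prop_mm}) together with the decomposition ${\rm Ric}^{1,1}=\mathrm M^{1,1}-\tfrac{1}{2}\mathrm B^{1,1}$, the bracket norm satisfies
\[
\tfrac{d}{dt}\|\mu_t\|^2=-8\bigl\|\mathrm M^{1,1}_{\mu_t}\bigr\|^2+4\bigl\langle \mathrm M^{1,1}_{\mu_t},\mathrm B^{1,1}_{\mu_t}\bigr\rangle\,.
\]
The main obstacle is to control the Killing-form cross-term. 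I expect the right input to be the algebraic identity $\ad_{JX}=-\ad_{X}\circ J$ valid on abelian complex Lie algebras: in the unitary $(1,0)$-frame it expresses the components of $\mathrm B^{1,1}$ as quadratic polynomials in the mixed bracket components $\mu_{i\bar j}^k$, whose structure permits an estimate of the form $\tfrac{d}{dt}\|\mu_t\|^2\leq C\|\mu_t\|^2$ for some constant $C=C(\g,J)$. This yields at most exponential growth of $\|\mu_t\|^2$ and hence long-time existence for the bracket flow, from which long-time existence for the original flow follows by the standard bracket-flow/metric-flow equivalence recalled in Section \ref{sec_bf}.
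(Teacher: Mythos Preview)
Your treatment of the first two claims is different from the paper's but should work in principle. The paper computes ${\rm Ric}^{1,1}$ directly from the Levi-Civita Christoffel symbols (using the Koszul formula in the unitary frame) and then compares $K_{i\bar j}-{\rm Ric}_{i\bar j}$ term by term, obtaining
\[
K_{i\bar j}-{\rm Ric}^{1,1}_{i\bar j}=\tfrac12\bigl(\mu_{k\bar k}^{\bar i}\mu_{\bar r r}^j-\mu_{k\bar k}^{\bar r}\mu_{\bar j i}^r-\mu_{\bar k k}^r\mu_{i\bar j}^{\bar r}\bigr),
\]
all of whose terms vanish when $\sum_k\mu(Z_k,\bar Z_k)=0$; tracing gives $k-r=-\tfrac12\|w\|^2$. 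Your route through ${\rm Ric}={\rm M}-\tfrac12{\rm B}$ is legitimate, but note the sign: the difference $k-r$ is a \emph{negative} multiple of $\|w\|^2$, not positive. Also, in the non-balanced case the tensors $Q^3,Q^4$ do \emph{not} vanish, so your reduction ``$K=S-\tfrac12Q^1+\tfrac14Q^2$'' only applies after the balanced hypothesis has been imposed; you use this, but it is worth making the order of the argument explicit.

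The genuine gap is in the long-time existence. The paper does \emph{not} go through the decomposition ${\rm Ric}^{1,1}={\rm M}^{1,1}-\tfrac12{\rm B}^{1,1}$ and Proposition~\ref{prop_mm}. Instead it shows by a direct computation, \emph{restricted to the subvariety of abelian brackets} $\mu(J\cdot,J\cdot)=\mu(\cdot,\cdot)$ and to endomorphisms $\alpha$ commuting with $J$, that ${\rm Ric}^{1,1}_\mu$ \emph{itself} satisfies a moment-map identity $\langle\pi(\alpha)\mu,\mu\rangle=2\langle\alpha,{\rm Ric}^{1,1}_\mu\rangle$. This gives immediately
\[
\tfrac{d}{dt}\|\mu_t\|^2=-2\langle\pi({\rm Ric}^{1,1}_{\mu_t})\mu_t,\mu_t\rangle=-4\|{\rm Ric}^{1,1}_{\mu_t}\|^2\leq 0,
\]
and hence long-time existence. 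Your approach leaves you with the cross-term $4\langle{\rm M}^{1,1}_\mu,{\rm B}^{1,1}_\mu\rangle$ which you do not control. The bound you hope for, $\tfrac{d}{dt}\|\mu_t\|^2\leq C\|\mu_t\|^2$, would require $\|{\rm B}^{1,1}_\mu\|^2\lesssim\|\mu\|^2$; but ${\rm B}$ is quadratic in $\mu$, so the natural estimate is only $\|{\rm B}^{1,1}_\mu\|^2\lesssim\|\mu\|^4$, which after Cauchy--Schwarz yields merely $\tfrac{d}{dt}\|\mu_t\|^2\leq C\|\mu_t\|^4$ --- compatible with finite-time blow-up. For abelian (as opposed to bi-invariant) $J$ there is no reason for ${\rm B}^{1,1}$ to vanish, so this obstacle is real. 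The fix is precisely the paper's direct moment-map computation for ${\rm Ric}^{1,1}$ on the abelian bracket variety.
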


\begin{proof}
We compute $K$ and ${\rm Ric}^{1,1}$ of a Hermitian left-invariant metric $g$ on a unimodular  Lie group $G$ with an abelian complex structure. 
Fix as usual a unitary left-invariant frame $\{Z_1\,,\dots,Z_n\}$ and let $\mu$ the Lie bracket on the Lie algebra of $G$. Since $\mu_{ij}^r=0$, formulae in section \ref{firstsection}  directly imply 
$$
\begin{aligned}
Q^1_{i\bar j}=&\,\mu_{i\bar r}^{\bar k}\mu_{\bar j r}^{k}-\mu_{i\bar r}^{\bar k}\mu_{\bar kr}^{j}-\mu_{k\bar r}^{\bar i}\mu_{\bar j r}^{k}+\mu_{k\bar r}^{\bar i}\mu_{\bar kr}^{j} \, ,\\
Q^2_{i\bar j}=&\, \mu_{\bar ki}^{r}\mu_{k\bar j}^{\bar r}-\mu_{\bar ki}^{r}\mu_{r\bar j}^{\bar k}-\mu_{\bar ri}^{k}\mu_{k\bar j}^{\bar r}+\mu_{\bar ri}^{k}\mu_{r\bar j}^{\bar k}=2\mu_{\bar ki}^{r}\mu_{k\bar j}^{\bar r}-\mu_{\bar ki}^{r}\mu_{r\bar j}^{\bar k}-\mu_{\bar ri}^{k}\mu_{k\bar j}^{\bar r} \, ,\\
Q^3_{i\bar j}=&\,\mu_{k\bar k}^{\bar i}\mu_{\bar r r}^j \, ,\\
2Q^4_{i\bar j}=&\, -\mu_{k \bar k}^{\bar r}\mu_{\bar ri}^{j} + \mu_{k \bar k}^{\bar r} \mu_{\bar ji}^{r} -\mu_{\bar kk}^{r} \mu_{r \bar j}^{\bar i} + \mu_{\bar kk}^{r} \mu_{i \bar j}^{\bar r} \,.
\end{aligned}
$$
Furthermore the unimodular assumption together with the abelian condition and the Jacobi identity imply  
\begin{equation}\label{stramba}
\mu_{i\bar k}^{\bar r}\mu_{\bar rj}^l=\mu_{j\bar k}^{\bar r}\mu_{\bar ri}^l\
\end{equation}
and the formulae above simplify to 
$$
\begin{aligned}
Q^1_{i\bar j}=&\,\mu_{i\bar r}^{\bar k}\mu_{\bar j r}^{k}-\mu_{i\bar r}^{\bar k}\mu_{\bar kr}^{j}-\mu_{k\bar r}^{\bar i}\mu_{\bar j r}^{k}+\mu_{k\bar r}^{\bar i}\mu_{\bar kr}^{j} \, ,\\
Q^2_{i\bar j}=&\, 2\mu_{\bar ki}^{r}\mu_{k\bar j}^{\bar r} \, ,\\
Q^3_{i\bar j}=&\,\mu_{k\bar k}^{\bar i}\mu_{\bar r r}^j \, ,\\
2Q^4_{i\bar j}=&\, -\mu_{k \bar k}^{\bar r}\mu_{\bar ri}^{j} + \mu_{k \bar k}^{\bar r} \mu_{\bar ji}^{r} -\mu_{\bar kk}^{r} \mu_{r \bar j}^{\bar i} + \mu_{\bar kk}^{r} \mu_{i \bar j}^{\bar r} \,.
\end{aligned}
$$

Hence we have 
$$
\begin{aligned}
K_{i\bar j}=&-\mu_{\bar ki}^r\mu_{k\bar j}^{\bar r}+\mu_{k\bar r}^{\bar i} \mu_{\bar k r}^j+\mu_{k\bar k}^r\mu_{r\bar j}^{\bar i}-\mu_{k\bar k}^{\bar r}\mu_{\bar ri}^{j}-\frac12 (\mu_{i\bar r}^{\bar k}\mu_{\bar j r}^{k}-\mu_{i\bar r}^{\bar k}\mu_{\bar kr}^{j}-\mu_{k\bar r}^{\bar i}\mu_{\bar j r}^{k}+\mu_{k\bar r}^{\bar i}\mu_{\bar kr}^{j})\\
&+\frac12 \mu_{\bar ki}^{r}\mu_{k\bar j}^{\bar r}+\frac12 \mu_{k\bar k}^{\bar i}\mu_{\bar r r}^j-\frac12 (-\mu_{k \bar k}^{\bar r}\mu_{\bar ri}^{j} + \mu_{k \bar k}^{\bar r} \mu_{\bar ji}^{r} -\mu_{\bar kk}^{r} \mu_{r \bar j}^{\bar i} + \mu_{\bar kk}^{r} \mu_{i \bar j}^{\bar r})\\
\end{aligned}
$$
and a direct computation yields 
\begin{equation}\label{UnimAbel}
K_{i\bar j} = \frac 12 \left( -\mu_{\bar ki}^r\mu_{k\bar j}^{\bar r}+\mu_{k\bar r}^{\bar i} \mu_{\bar k r}^j -\mu_{i\bar r}^{\bar k}\mu_{\bar j r}^{k}+\mu_{k\bar k}^{\bar i}\mu_{\bar r r}^j-\mu_{k \bar k}^{\bar r} \mu_{\bar ji}^{r} - \mu_{\bar kk}^{r} \mu_{i \bar j}^{\bar r}\right) \,.
\end{equation}
Let us denote now by $D$ the Levi-Civita connection of $g$, $\Gamma_{ij}^k$ the Christoffel symbols of $D$ and 
$$
\begin{aligned}
{\rm Ric}_{i\bar j} =& \Gamma_{r \bar r}^{l}\Gamma_{il}^{j}+\Gamma_{r \bar r}^{\bar l}\Gamma_{i\bar l}^{j} -\Gamma_{i \bar r}^{l}\Gamma_{rl}^{j}-\Gamma_{i \bar r}^{\bar l}\Gamma_{r\bar l}^{j}+\Gamma_{\bar r r}^{l}\Gamma_{i l}^{j}+\Gamma_{\bar r r}^{\bar l}\Gamma_{i \bar l}^{j}-\Gamma_{ir}^{l}\Gamma_{\bar r l}^{j}-\mu_{i\bar r}^k \Gamma_{k r}^j - \mu_{i\bar r}^{\bar k}\Gamma_{\bar k r}^j\\
=&(\Gamma_{r \bar r}^{l}+\Gamma_{\bar r r}^{l} )\Gamma_{il}^{j}+(\Gamma_{r \bar r}^{\bar l}+\Gamma_{\bar r r}^{\bar l})\Gamma_{i\bar l}^{j}-(\Gamma_{ir}^{l}+\mu_{i\bar l}^{\bar r})\Gamma_{\bar r l}^{j}-(\Gamma_{i \bar r}^{l}+\mu_{i\bar l}^r)\Gamma_{rl}^{j}-\Gamma_{i \bar r}^{\bar l}\Gamma_{r\bar l}^{j}
\end{aligned}
$$
the $(1,1)$-component of the Ricci tensor.  The abelian condition together with Kozoul's formula imply 
$$
\Gamma_{kr}^l= \frac 12(-\mu_{r \bar l}^{\bar k}+\mu_{\bar l k}^{\bar r})\,,\quad \Gamma_{\bar kr}^l=\frac 12(\mu_{\bar kr}^l-\mu_{r \bar l}^{k})\,,\quad \Gamma_{\bar k r}^{\bar l}=\frac 12 (\mu_{\bar kr}^{\bar l}+\mu_{l\bar k}^{\bar r})\,,
$$
$$
\Gamma_{k\bar r}^l=\frac 12 (\mu_{k\bar r}^l+\mu_{\bar lk}^r)\,,\quad \Gamma_{k\bar r}^{\bar l}=\frac 12 (\mu_{k\bar r}^{\bar l}-\mu_{\bar rl}^{\bar k})\,.
$$
In paritucular since $G$ is unimodular we have 
$$
\Gamma_{r \bar r}^{l}=\Gamma_{r \bar r}^{\bar l}=0\,,\quad \Gamma_{ir}^{l}+\mu_{i\bar l}^{\bar r} = -\frac 12(\mu_{r \bar l}^{\bar i}+\mu_{\bar l i}^{\bar r})\,,\quad \Gamma_{i \bar r}^{l}+\mu_{i\bar l}^r=\frac 12 (\mu_{i\bar r}^l-\mu_{\bar li}^r)\,.
$$
and the formula of the Ricci tensor simplify to 
$$
\begin{aligned}
{\rm Ric}_{i\bar j}= \frac 14&\left(\mu_{r \bar l}^{\bar i}\mu_{\bar rl}^j+\mu_{\bar l i}^{\bar r}\mu_{\bar rl}^j-\mu_{r \bar l}^{\bar i}\mu_{l \bar j}^{r}-\mu_{\bar l i}^{\bar r}\mu_{l \bar j}^{r}\right.\\
&+\mu_{i\bar r}^l\mu_{l \bar j}^{\bar r}-\mu_{\bar li}^r\mu_{l \bar j}^{\bar r}-\mu_{i\bar r}^l\mu_{\bar j r}^{\bar l}+\mu_{\bar li}^r\mu_{\bar j r}^{\bar l}
-\left. \mu_{i\bar r}^{\bar l}\mu_{r\bar l}^j+\mu_{\bar rl}^{\bar i}\mu_{r\bar l}^j-\mu_{i\bar r}^{\bar l}\mu_{\bar jr}^l+\mu_{\bar rl}^{\bar i}\mu_{\bar jr}^l\right)\,.
\end{aligned}
$$
Finally, by using \eqref{stramba}, we obtain 
$$\begin{aligned}
{\rm Ric}_{i\bar j} = \frac 12\left(\mu_{r \bar l}^{\bar i}\mu_{\bar rl}^j-\mu_{\bar l i}^{\bar r}\mu_{l \bar j}^{r}-\mu_{\bar li}^r\mu_{l \bar j}^{\bar r}\right)\,.
\end{aligned}
$$
Therefore 
$$
K_{i\bar j}-{\rm Ric}_{i\bar j} = \frac 12 \left(\mu_{k\bar k}^{\bar i}\mu_{\bar r r}^j-\mu_{k \bar k}^{\bar r} \mu_{\bar ji}^{r} - \mu_{\bar kk}^{r} \mu_{i \bar j}^{\bar r}\right) 
$$
and  
$$
k-{\rm tr}_g{\rm Ric}= \frac 12 \left(\mu_{k\bar k}^{\bar i}\mu_{\bar r r}^i-\mu_{k \bar k}^{\bar r} \mu_{\bar ii}^{r} - \mu_{\bar kk}^{r} \mu_{i \bar i}^{\bar r}\right)=-\frac12 \mu_{\bar kk}^{r} \mu_{i \bar i}^{\bar r}\,.
$$
Since $G$ is unimodular, the metric $g$ is balanced if only if the sum $\sum_k\mu(Z_k,\bar Z_k)$ is vanishing and the first part of the claim follows. 

\medskip 
About the long-time existence of flow $\tfrac{d}{dt}\,g_t=-{\rm Ric}^{1,1}(g_t)$ we work as in the proof of Theorem \ref{main_cxHCF} by using the bracket flow argument. So we regard the flow as an evolution equation in the space $\mathcal C'$ consisting on brackets on the Lie algebra of $G$ satisfying the abelian assumption $\mu(J\cdot,J\cdot)=\mu(\cdot,\cdot)$. In this case the bracket flow equation writes as 
$$
\frac{d}{dt}	\mu_t=- \pi(K_{\mu_t})\mu_t\,,\quad \mu_{|t=0}=\mu_0\,,
$$
where for $\mu\in \mathcal C'$
$$
(K_\mu)_i^j= \frac 12\left(\mu_{r \bar l}^{\bar i}\mu_{\bar rl}^j-\mu_{\bar l i}^{\bar r}\mu_{l \bar j}^{r}-\mu_{\bar li}^r\mu_{l \bar j}^{\bar r}\right)
$$
Now if $\alpha$ is a real endomorphism of $\g$ that commutes with $J$, then

$$
-\pi( \alpha)\mu(Z_i,\bar Z_j) = (\alpha_i^k \mu_{k \bar j}^r + \alpha_{\bar j}^{\bar k} \mu_{i \bar k}^r - \mu_{i \bar j}^k \alpha_k^r) Z_r +(\alpha_i^k \mu_{k \bar j}^{\bar r} + \alpha_{\bar j}^{\bar k} \mu_{i \bar k}^{\bar r} - \mu_{i \bar j}^{\bar k} \alpha_{\bar k}^{\bar r}) \bar Z_r 
$$
and 

$$
\langle \alpha, K_\mu\rangle =2{\rm Re}(\alpha_{i}^{j}(K_{\mu})_{i}^j)=2 \sum_{i=1}^n Re \left\lbrace  \alpha_i^j(-\mu_{k \bar i}^{\bar r}\mu_{\bar kj}^{r}+\mu_{\bar kr}^{i} \mu_{k \bar r}^{\bar j} -\mu_{\bar i r}^{k}\mu_{j \bar r}^{\bar k})  \right\rbrace  \,.
$$
Moreover if $\theta\in \Lambda^2\g^*\otimes \g$ satisfies  $\theta(J\cdot,J\cdot)=\theta(\cdot,\cdot)$, then 
$$
\langle \mu,\theta\rangle  =2 {\rm Re} \left\lbrace \mu_{i \bar j}^k \theta_{\bar i j}^{\bar k} + \mu_{i \bar j}^{\bar k} \theta_{\bar i j}^k \right\rbrace \,,
$$
and so 
$$
\langle\pi(\alpha)\mu, \mu\rangle = 4 {\rm Re}  \left\lbrace \alpha_i^k \mu_{k \bar j}^r\mu_{\bar i j}^{\bar r}+ \alpha_i^k\mu_{k \bar j}^{\bar r}\mu_{\bar i j}^{r} - \alpha_k^r \mu_{i\bar j}^k\mu_{\bar ij}^{\bar r} \right\rbrace\,,
$$
which implies
$$
-\langle\pi(\alpha)\mu, \mu\rangle= - 2 \langle \alpha, P_\mu\rangle\,.
$$
In particular 
$$
\frac{d}{dt} \langle\mu, \mu\rangle  = -2\langle \pi(K_\mu)\mu, \mu\rangle = - 4\langle K_\mu,K_\mu\rangle
$$
and Theorem \ref{abelian} follows.
\end{proof}

\begin{rem}
{\em Notice that if a unimodular Lie group $G$ has a left-invariant $(1,0)$-frame $\{Z_1,\dots,Z_n\}$ such that $\mu(Z_i,\bar Z_i)=0$ for every fixed index $i$, then every diagonal left-invariant metric is balanced. }
\end{rem}

\begin{ex}\em
Let $\g$ be the 2-step nilpotent Lie algebra with structure equations 
$$ de^1=de^2=de^3=de^4=0\,,\quad de^5=e^{13}-e^{24}\,, \quad de^6=e^{14}+e^{23}$$ 
and $J$ the abelian complex structure given by $Je^1 = -e^2$, $Je^3=e^4$ and $Je^5=e^6$. If we set
$$Z_1 = \frac{1}{\sqrt2}(e_1 - i Je_1)\,, \quad Z_2 = \frac{1}{\sqrt2}(e_2 - i Je_2)\,, \quad Z_3 = \frac{1}{\sqrt2}(e_3 - i Je_3)\,,$$
then the bracket takes the expression
$$
\mu = -\sqrt 2 \, \zeta^1 \wedge \bar \zeta^2 \otimes \bar Z_3 -\sqrt 2 \,\bar  \zeta^1 \wedge \zeta^2 \otimes Z_3\,.
$$
In particular every diagonal metric $g = a\, \zeta^1 \odot \zeta^1 + b\, \zeta^2 \odot \zeta^2 + c\, \zeta^3 \odot \zeta^3$ is balanced and  \eqref{UnimAbel} implies 
$$
K(g)= - \frac{c}{b} \zeta^1 \odot \bar \zeta^1 - \frac{c}{a} \zeta^2 \odot \bar \zeta^2 +\frac{c^2}{ab} \zeta^3 \odot \bar \zeta ^3={\rm Ric}^{1,1}(g) \,.
$$
HCF starting from $g_0=\zeta^1 \odot \zeta^1 +  \zeta^2 \odot \zeta^2 +  \zeta^3 \odot \zeta^3$ is equivalent to the following system 
$$
 a^\prime = \frac{c}{b}\,, \quad  b^\prime = \frac{c}{b}\,, \quad  c^\prime = -\frac{c^2}{ab}\,, \quad a(0)=b(0)=c(0)=1\,.
$$
Hence
$$
g_t=\sqrt[3]{3t+1}\,\zeta^{1}\odot\zeta^{\bar 1}+\sqrt[3]{3t+1}\,\zeta^{2}\odot\zeta^{\bar 2}+\frac{1}{\sqrt[3]{3t+1}}\,\zeta^{3}\odot\zeta^{\bar 3}
$$
solves the problem. 
\end{ex}

\end{document}